\newtheorem{theorem}{Theorem}[section]
\newtheorem{lemma}[theorem]{Lemma}
\newtheorem{corollary}[theorem]{Corollary}
\newtheorem*{theorem*}{Theorem}
\theoremstyle{definition}
\newtheorem{definition}[theorem]{Definition}
\newtheorem{remark}[theorem]{Remark}
\newtheorem{example}[theorem]{Example}
\newcommand\restr[2]{{
  \left.\kern-\nulldelimiterspace
  #1
  \vphantom{\big|}
  \right|_{#2}
  }}
\newcommand{\cL}{{\mathcal L}}
\begin{document}

\title[Equivariant classification of  $b^m$-symplectic surfaces]{Equivariant classification of  $b^m$-symplectic surfaces and Nambu structures}

\author{Eva Miranda}\address{ Eva Miranda,
Department of Mathematics, Universitat Polit\`{e}cnica de Catalunya, Barcelona, Spain \it{e-mail: eva.miranda@upc.edu}
 }
\author{Arnau Planas} \address{Arnau Planas, Department of Mathematics, Universitat Polit\`{e}cnica de Catalunya, Barcelona, Spain \it{e-mail: arnauplanasbahi@gmail.com }}
   \thanks{Both authors are supported by the Ministerio de Econom\'{i}a y Competitividad project MTM2015-69135-P/FEDER. Eva Miranda is also supported by the Catalan Grant 2014SGR634. }

\date{\today}

\begin{abstract} In this paper we extend the classification scheme in \cite{scott} for $b^m$-symplectic surfaces and, more generally,  $b^m$-Nambu structures  to the equivariant setting.  When the compact group is the group of deck-transformations of an orientable covering, this yields the classification of these objects for non-orientable surfaces.   The paper also includes recipes to construct $b^m$-symplectic structures on surfaces. Feasibility of such constructions depends on orientability and on the colorability of an associated graph. The desingularization technique in \cite{evajonathanvictor} is revisited for surfaces and the compatibility with this classification scheme is analyzed.
We recast the strategy used in \cite{david} to classify stable Nambu structures of top degree on orientable manifolds to classify $b^m$-Nambu structures (not necessarily oriented) using the language of $b^m$-cohomology. The paper ends up with an equivariant classification theorem of $b^m$-Nambu structures of top degree.

\end{abstract}

\maketitle

\section{Introduction}

The topological classification of compact surfaces is determined by orientability and genus. The geometrical classification of symplectic surfaces  was established by Moser \cite{moser}. Moser proved that any two compact symplectic surfaces with symplectic forms lying on the same De Rham cohomology class  are equivalent in the sense that there exists a diffeomorphism taking one symplectic structure to the other.

Poisson structures show up naturally in this scenario as a generalization of symplectic structures where the non-degeneracy condition is relaxed.
The first examples of Poisson structures are symplectic manifolds and manifolds with the zero Poisson structure. In-between these two extreme examples there is a wide variety of Poisson manifolds. Poisson structures with dense symplectic leaves and controlled singularities have been the object of study of several recent articles (see for instance \cite{guimipi}, \cite{guimipi2}, \cite{gmps}, \cite{evajonathanvictor}, \cite{gualtieri},  \cite{marcutosorno2}). The classification of these objects in dimension $2$ is given by a suitable cohomological condition. In the extreme case of symplectic manifolds this cohomology coincides with de Rham cohomology and, as explained above, this classification was already known to Moser \cite{moser}. For orientable $b$-symplectic manifolds, the classification can be formulated in terms of $b$-cohomology (see \cite{guimipi}) which reinterprets former classification invariants by Radko \cite{radko}.

It is possible to consider other classes of Poisson manifolds with simple singularities like $b^m$-symplectic manifolds \cite{scott} or more general singularities \cite{mirandascott} by relaxing the transversality condition for $b$-symplectic manifolds. These structures have relevance in mechanics: most of the examples are found naturally in the study of celestial mechanics (see  \cite{km}, \cite{dkm}, \cite{kms}).
In the same way,  $b^m$-symplectic structures are classified in terms of $b^m$-cohomology \cite{scott}. The recent papers \cite{cavalcanti}, \cite{marcutosorno2}, \cite{frejlichmartinezmiranda} have renewed interest in the  non-orientable counterparts of these structures.

In this article we focus our attention on two kinds of objects:
  $b^m$-symplectic manifolds and  a generalization of stable Nambu structures which we call $b^m$-Nambu structures of top degree. These objects coincide in dimension 2.
For these we prove Radko-type equivariant classification.
When the group considered is the group of deck-transformations of an orientable covering, this yields the classification of non-orientable compact surfaces in the $b^m$-case. Such a classification was missing in the literature. We also examine the compatibility of this classification scheme and a desingularization procedure described in \cite{evajonathanvictor} proving that, for surfaces, equivalent $b^{2k}$-symplectic structures get mapped to equivalent symplectic structures under the desingularization procedure though non-equivalent $b^{2k}$-symplectic structures might get mapped to equivalent symplectic structures via this procedure.

\textbf{Organization of the paper:} In Section \ref{Preliminaries} we include the necessary preliminaries of $b^m$-structures. In Section \ref{toyexamples} we present some examples of $b^m$-symplectic surfaces on orientable and non-orientable manifolds. In Section \ref{sec:nonorientable} we present an equivariant $b^m$-Moser theorem and use it to classify non-orientable $b^m$-symplectic surfaces. In Section \ref{sec:constructions} we give explicit constructions of $b^m$-symplectic structures  with prescribed critical set depending on orientability and colorability of an associated graph. In Section \ref{sec:desingularization} we analyze the behavior of this classification under the desingularization procedure described in \cite{evajonathanvictor}. In Section \ref{sec:nambu},  $b^m$-Nambu structures are classified using the equivariant techniques.

\section{Preliminaries}\label{Preliminaries}

The context of this paper is the so called $b$-Poisson or $b$-symplectic geometry.
A $b$-Poisson vector field on a manifold $M^{2n}$ is a Poisson vector field such that the map
\begin{equation}\label{eq:transverse}
F: M \rightarrow \bigwedge^{2n} TM: p \mapsto (\Pi(p))^n
\end{equation}
is transverse to the zero section. Then, a pair $(M,\Pi)$ is called a \textbf{$b$-Poisson manifold} and the vanishing set $Z$ of $F$ is called the \textbf{critical hypersurface}.

This class of Poisson structures was studied by Radko \cite{radko} in dimension two and considered in numerous papers in the last years: \cite{guimipi}, \cite{guimipi2}, \cite{gmps}, \cite{evajonathanvictor}, \cite{marcutosorno2}, \cite{gualtieri}, \cite{gualtierietal} and \cite{cavalcanti} among others.

Next, we recall Radko's classification theorem and  the cohomological re-statement presented in \cite{guimipi2}.

\begin{definition}\label{lvol}  The \textbf{Liouville volume} is the following limit: $ V(\Pi )\coloneqq \lim _{\varepsilon \to 0}\int _{|h|>\varepsilon }\omega^n $.
	It exists and is independent of the choice of the defining function $h$ of $Z$ (see \cite{radko} for the proof).
\end{definition}

\begin{definition}
	Let $(M,\Pi)$ be a Poisson manifold, $\Omega$ a volume form on it, and let $u_f$ denote the Hamiltonian vector field of a smooth function $f:M\rightarrow\mathbb{R}$. The \textbf{modular vector field} ($X^{\Omega}$) is the derivation defined as follows:
	$$f\mapsto \frac{\mathcal{L}_{u_f}\Omega}{\Omega}.$$
	Given $\gamma$ a connected component of $Z(\Pi )$ the \textbf{modular period} of $\Pi$ around $\gamma$ is defined by:

	$$T_{\gamma}(\Pi )\coloneqq \textrm{period of }\, {X^{\Omega}}|_{\gamma }. $$
	
\end{definition}

The following theorem classifies $b$-symplectic structures on surfaces using these invariants:
\begin{theorem}[\textbf{Radko} \cite{radko}]\label{Radko}
	Two Poisson structures $\Pi$, $\Pi'$  are globally equivalent if and only if the following invariants coincide:
	\begin{enumerate}
		\item the Liouville volume,
		\item the topology of $Z$ and
		\item the modular periods on each connected component of $Z$.
	\end{enumerate}
\end{theorem}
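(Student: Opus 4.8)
The plan is to prove the two implications separately, with essentially all the work in the sufficiency direction. \emph{Necessity} is immediate: a diffeomorphism $\phi$ with $\phi_*\Pi=\Pi'$ carries $Z$ to $Z'$ and hence matches their topology, pulls a defining function back to a defining function so that $V(\Pi)=V(\Pi')$ by the independence statement in Definition~\ref{lvol}, and conjugates the modular vector fields up to the auxiliary choice of volume form, so the (volume-independent) modular periods agree on corresponding components. The substance is the converse, which I would establish by a $b$-cohomological Moser argument, working in the dual picture where a two-dimensional $b$-Poisson structure $\Pi$ corresponds to a $b$-symplectic form $\omega=\Pi^{-1}$, i.e. a closed nondegenerate $b$-form of degree two with singular locus $Z$.

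First I would normalize $Z$. Since the topologies of $Z$ and $Z'$ coincide, the two configurations of embedded circles cut the closed oriented surface into diffeomorphic systems of pieces, so an ambient isotopy produces a diffeomorphism $\phi_0$ with $\phi_0(Z)=Z'$; replacing $\Pi$ by $(\phi_0)_*\Pi$ I may assume $Z=Z'$ while leaving the Liouville volume and all modular periods unchanged. Next I would match the structures semilocally: around each component $\gamma\cong S^1$ of $Z$, with transverse defining coordinate $x$ and angular coordinate $\theta$, a $b$-symplectic form admits a model $\omega=c\,\tfrac{dx}{x}\wedge d\theta+(\text{exact tail})$ in which the constant $c$ is exactly the datum recorded by $T_\gamma$. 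Since the modular periods agree on every $\gamma$, a relative Moser argument in a tubular neighbourhood arranges $\omega=\omega'$ on an open set $U\supset Z$.

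The global identification of classes then comes from the Mazzeo--Melrose decomposition $H^2_b(S)\cong H^2(S)\oplus H^1(Z)$. Under it the class $[\omega]\in H^2_b(S)$ is pinned down by its de Rham part in $H^2(S)\cong\RR$, which is read off from the regularized integral and therefore equals the Liouville volume, and by its part in $H^1(Z)\cong\bigoplus_\gamma\RR$, which records exactly the modular periods $\{T_\gamma\}$. Hence equality of all three invariants forces $[\omega]=[\omega']$ in $H^2_b(S)$, so $\omega'-\omega=d_b\mu$ for some $b$-one-form $\mu$, and the previous step lets me choose $\mu$ to vanish on $U$.

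Finally I would run the Moser path: set $\omega_t=(1-t)\omega+t\omega'$ and solve $\iota_{X_t}\omega_t=-\mu$ for a time-dependent $b$-vector field $X_t$; its flow $\psi_t$ gives $\psi_1^*\omega'=\omega$, and $\psi_t$ automatically preserves $Z$ since it is generated by a $b$-field with $\mu\equiv 0$ near $Z$. \textbf{The main obstacle} is keeping $\omega_t$ a genuine nondegenerate $b$-form for every $t\in[0,1]$: on each component of $S\setminus Z$ the forms $\omega,\omega'$ are honest area forms, and I must ensure they induce the same orientation there so that the convex combination never degenerates. This is where the sign and orientation bookkeeping implicit in the Liouville volume and modular-period data must be used carefully, and it is the point that genuinely exploits dimension two rather than pure cohomology.
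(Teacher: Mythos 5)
A point of comparison first: the paper does not prove Theorem \ref{Radko} at all---it imports it from \cite{radko} and then recasts it cohomologically (the restatement from \cite{guimipi2} and, in the equivariant setting, Theorem \ref{emt}). Your route---normalize $Z$, match the structures semilocally using the modular periods, identify $b$-cohomology classes via Mazzeo--Melrose, then run a Moser path---is precisely that modern $b$-cohomological proof, so the architecture is the right one and is the same framework the paper itself relies on for its own generalizations.

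The genuine gap is the one you yourself call ``the main obstacle'': you never close it, and you look for its resolution in the wrong place. The per-component signs of $\omega$ on $S\setminus Z$ cannot be recovered from the Liouville volume together with the modular periods: the volume is a single real number and the periods are positive by definition, whereas the sign data is a $\pm$-labelling (a $2$-coloring, in the language of Section \ref{sec:constructions}) of all the components of $S\setminus Z$. In Radko's actual formulation this data is carried by invariant (2), which is not the homeomorphism type of $Z$ alone but the diffeomorphism class of the configuration of zero curves with their induced orientations (equivalently, with the signs of $\Pi$ on adjacent components); the abbreviated phrase ``the topology of $Z$'' hides this. Read correctly, the normalizing diffeomorphism $\phi_0$ can and must be chosen to match \emph{signed} configurations---note also that ``diffeomorphic pieces'' alone does not produce an ambient diffeomorphism, since one needs the adjacency graph as well, which again is part of invariant (2). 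Once this is done, your own semilocal step closes the sign question for free: after arranging $\omega=\omega'$ on a neighborhood $U\supset Z$, every component of $S\setminus Z$ meets $U$ (for $S$ connected each component has closure touching $Z$), and since a nondegenerate form has constant sign on a connected component, $\omega$ and $\omega'$ induce the same orientation on all of $S\setminus Z$; hence the linear path $\omega_t$ is nondegenerate and the Moser flow exists. Two smaller remarks: you do not need the primitive $\mu$ to vanish on $U$ in order to preserve $Z$ (arranging that would require a relative-cohomology argument you do not supply), because the solution of $\iota_{X_t}\omega_t=-\mu$ is automatically a $b$-vector field and hence tangent to $Z$---this is exactly how the proof of Theorem \ref{emt} in the paper handles the same point; and in the necessity direction the Liouville volume is only preserved up to sign unless the equivalence is taken orientation-preserving, which is part of Radko's conventions.
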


An appropriate formalism to deal with these structures was introduced in \cite{guimipi} where \textbf{$b$-manifolds}\footnote{The `$b$' of $b$-manifolds stands for `boundary', as initially considered by Melrose for the study of pseudo-differential operators on manifolds with boundary.} are defined to be pairs $(M,Z)$ of a manifold and a hypersurface. In this way the concept of $b$-manifold previously introduced by Melrose is generalized.
A \textbf{$b$-vector field} on a $b$-manifold $(M,Z)$ is a vector field tangent to the hypersurface $Z$ at every point $p\in Z$. Observe that if $x$ is a local defining function for $Z$ then the set of $b$-vector fields is locally generated by

\begin{equation}\label{eq:generatebvectors}
\{x \frac{\partial}{\partial{x}}, \frac{\partial}{\partial{x_1}},\ldots, \frac{\partial}{\partial{x_{n-1}}}\}.
\end{equation}

In contrast with \cite{guimipi}, in this paper we are not requiring the existence of a global defining function for $Z$ and orientability of $M$.
The vector bundle that has as sections the $b$-vector fields is called the \textbf{$b$-tangent bundle} $^b TM$ (its existence is established in \cite{guimipi}).
The \textbf{$b$-cotangent bundle} $^b T^*M$ is defined using duality. A \textbf{$b$-form} is a  section of the $b$-cotangent bundle. Locally these sections are generated by
\begin{equation}\label{eq:generatebforms}
\{\frac{1}{x} dx, d x_1,\ldots, d x_{n-1}\}.
\end{equation}
In the same way we define a \textbf{$b$-form of degree $k$} to be a section of the bundle $\bigwedge^k(^b T^*M)$, the set of these forms is denoted $^b\Omega^k(M)$. Denoting by $f$ the distance function\footnote{Originally in \cite{guimipi} $f$ stands for a global function, but in order to deal with non-orientable manifolds we may proceed as in \cite{marcutosorno2} and use the distance function.} to the set $Z$, we may write the following decomposition as in \cite{guimipi}:

\begin{equation}\label{eq:decomposition}
\omega=\alpha\wedge\frac{df}{f}+\beta, \text{ with } \alpha\in\Omega^{k-1}(M) \text{ and } \beta\in\Omega^k(M).
\end{equation}

This decomposition allows to extend the differential of the De Rham complex $d$ to $^b\Omega(M)$ by setting $d\omega=d\alpha\wedge\frac{df}{f}+d\beta.$ The cohomology associated to this complex is called \textbf{$b$-cohomology} and it is denoted by \textbf{$^b H^*(M)$}.

Once reached this point a \textbf{$b$-symplectic} form on a $b$-manifold $(M^{2n},Z)$ is defined as a non-degenerate closed $b$-form of degree $2$ (i.e., $\omega_p$ is of maximal rank as an element of $\Lambda^2(\,^b T_p^* M)$ for all $p\in M$). The notion to $b$-symplectic forms is dual to notion of $b$-Poisson structures. The advantage of using forms is that symplectic tools can be `easily' exported.

Radko's classification theorem can be translated  into this language, as was done in \cite{guimipi}: 

 \begin{theorem}[\textbf{Radko's theorem in $b$-cohomological language, \cite{guimipi2}}] Let $S$ be a  compact orientable surface and  and let  $\omega_0$ and $\omega_1$ be two $b$-symplectic forms on $(M,Z)$  defining the same $b$-cohomology class (i.e.,$[\omega_0]= [\omega_1]$).  Then there exists a diffeomorphism $\phi:M\rightarrow M$ such that $\phi^*\omega_1 = \omega_0$.
\end{theorem}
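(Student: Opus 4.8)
The plan is to run Moser's path method inside the category of $b$-forms. Since $[\omega_0]=[\omega_1]$ in $^b H^2(M)$, I would first fix a primitive $\mu\in {}^b\Omega^1(M)$ with $\omega_1-\omega_0=d\mu$, and consider the linear path
\[
\omega_t=\omega_0+t(\omega_1-\omega_0)=(1-t)\omega_0+t\omega_1,\qquad t\in[0,1].
\]
Each $\omega_t$ is a closed $b$-form, because $\omega_0$ and $\omega_1$ are; the entire content of the argument is then to guarantee that $\omega_t$ stays a $b$-symplectic (that is, non-degenerate) $b$-form for every $t$, which I will come back to as the main obstacle.

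Granting non-degeneracy, each $\omega_t$ induces a bundle isomorphism $^bTM\to {}^bT^*M$, $X\mapsto \iota_X\omega_t$, so there is a unique time-dependent $b$-vector field $X_t$ determined by $\iota_{X_t}\omega_t=-\mu$. Because $X_t$ is a genuine $b$-vector field it is tangent to $Z$ at every point, hence its flow $\phi_t$ preserves $Z$ setwise; and since $M$ is compact this flow is complete and defined for all $t\in[0,1]$. Cartan's formula together with $d\omega_t=0$ then gives
\[
\frac{d}{dt}\left(\phi_t^{*}\omega_t\right)
=\phi_t^{*}\left(\mathcal{L}_{X_t}\omega_t+\frac{d}{dt}\omega_t\right)
=\phi_t^{*}\left(d\,\iota_{X_t}\omega_t+(\omega_1-\omega_0)\right)
=\phi_t^{*}\left(-d\mu+d\mu\right)=0,
\]
so $\phi_t^{*}\omega_t$ is constant in $t$. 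Evaluating at the endpoints yields $\phi_1^{*}\omega_1=\phi_0^{*}\omega_0=\omega_0$, and $\phi:=\phi_1$ is the diffeomorphism sought.

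The step I expect to be delicate is precisely the non-degeneracy of the path $\omega_t$. On a surface a $b$-symplectic form is exactly a $b$-volume form, and a convex combination of two $b$-volume forms fails to be non-degenerate exactly where they induce opposite $b$-orientations; in a local model $\omega_i=h_i\,\tfrac{df}{f}\wedge d\theta$ near a component of $Z$, the combination has leading coefficient $(1-t)h_0+th_1$, which is nowhere vanishing for all $t$ iff $h_0$ and $h_1$ have a common sign. I would therefore argue that the hypothesis $[\omega_0]=[\omega_1]$ forces $\omega_0$ and $\omega_1$ to define the same $b$-orientation: under the Mazzeo–Melrose description $^b H^2(M)\cong H^2(M)\oplus H^1(Z)$, the class records both the Liouville volume and the modular periods along the components of $Z$ — exactly Radko's invariants from Theorem \ref{Radko} — so equality of the classes pins down the sign of the leading coefficient near each component of $Z$ and the global sign away from it, making the two forms positively proportional as $b$-volume forms and every $\omega_t$ again non-degenerate. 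If one prefers to avoid this global sign bookkeeping, the alternative is to apply a relative $b$-Darboux normal form near $Z$ to replace $\omega_1$ by a cohomologous $b$-form agreeing with $\omega_0$ on a tubular neighborhood of $Z$; then $\mu$ may be chosen to vanish there, the path is manifestly non-degenerate near $Z$, and on the complement one is reduced to the classical Moser argument for volume forms with equal total volume and matching orientation.
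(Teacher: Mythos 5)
Your proposal is correct and takes essentially the same route as the paper, whose de facto proof of this statement is its proof of the equivariant $b^m$-Moser theorem (Theorem \ref{emt}): the linear path $\omega_t=(1-t)\omega_0+t\omega_1$, a $b$-primitive $\mu$ of $\omega_1-\omega_0$, and the Moser equation $\iota_{X_t}\omega_t=-\mu$, whose solution is a $b$-vector field tangent to $Z$ and whose flow gives the diffeomorphism. In fact you are more careful than the paper on the one delicate point: the paper simply asserts that the two forms have the same sign on $S\setminus Z$ so that the linear path stays non-degenerate, whereas you correctly derive this from $[\omega_0]=[\omega_1]$ (equal, nonzero modular periods fix the sign of the leading term near each component of $Z$, and constancy of sign on the connected components of $S\setminus Z$ propagates it globally).
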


By relaxing the transversality condition (for instance, asking that $\Pi^n$ has a singularity of type $A_n$ on the Arnold's list, see \cite{arnold} and \cite{arnold2}), Scott \cite{scott} defined the $b^m$-Poisson structures.

Proceeding \emph{mutatis mutandis} as in the $b$-case one defines the $b^m$-tangent bundle ($^{b^m} TM $), the  $b^m$-cotangent bundle ($^{b^m} T^*M $), the $b^m$-De Rham complex and the $b^m$-symplectic structures. For instance, the definition of the $b^m$-tangent bundle can be defined as the bundle whose sections are generated by:

\begin{equation}\label{eq:generatebmvectors}
\{x^m \frac{\partial}{\partial{x}}, \frac{\partial}{\partial{x_1}},\ldots, \frac{\partial}{\partial{x_{n-1}}}\},
\end{equation}

with $x$ such that $|x| = \lambda$, and $\lambda$ is the distance function to $z$.

A {\bf Laurent Series} of a closed $b^m$-form $\omega$ is a decomposition of $\omega$ in a tubular neighborhood $U$ of $Z$
of the form
\begin{equation}\label{eqn:laurent}
\omega = \frac{dx}{x^m} \wedge (\sum_{i = 0}^{m-1}\pi^*(\alpha_{i})x^i) + \beta
\end{equation}

\noindent with $\pi: U \to Z$ the projection of the tubular neighborhood onto $Z$, $\alpha_{i}$ a closed smooth De Rham form on $Z$ and $\beta$  a De Rham form on $M$.

In \cite{scott} it is proved that in  a  neighborhood of $Z$, every   closed $b^m$-form  $\omega$ can be written in a  Laurent form of type (\ref{eqn:laurent}) having fixed a (semi)local defining function.

 $b^m$-Cohomology is related to de Rham cohomology via the following theorem:
\begin{theorem}[\textbf{$b^m$-Mazzeo-Melrose}, \cite{scott}]\label{thm:Mazzeo-Melrose}
\begin{equation}\label{eqn:Mazzeo-Melrose}
 ^{b^m}H^p(M) \cong H^p(M)\oplus(H^{p-1}(Z))^m.
\end{equation}
\end{theorem}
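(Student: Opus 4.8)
The plan is to promote Scott's Laurent decomposition (\ref{eqn:laurent}) into a cohomological isomorphism by reading off the de Rham classes of its coefficients. Concretely, I would fix a tubular neighborhood $U$ of $Z$ with projection $\pi$ and a semilocal defining function $x$, and define the \emph{Mazzeo--Melrose map}
$$\Phi: {}^{b^m}H^p(M) \longrightarrow H^p(M) \oplus (H^{p-1}(Z))^m, \qquad [\omega] \mapsto \bigl([\beta],[\alpha_0],\dots,[\alpha_{m-1}]\bigr),$$
where a closed representative $\omega$ is written in Laurent form (\ref{eqn:laurent}) and one records the classes $[\alpha_i]\in H^{p-1}(Z)$ together with the class $[\beta]\in H^p(M)$ of the smooth remainder. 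By Scott's Laurent theorem each $\alpha_i$ is closed, so these classes are defined. The first task is \textbf{well-definedness}: changing $x$, $U$ or $\pi$ alters each $\alpha_i$ and $\beta$ only by exact forms (a direct computation, since two defining functions differ by a nonvanishing smooth factor), and adding a $b^m$-exact form $d\eta$ changes the coefficients by the Laurent coefficients of $d\eta$, which are exact on $Z$ and on $M$ respectively. The computation underlying all of this is that $\tfrac{dx}{x^{k}}$ is $b^m$-closed, whence for any form $\gamma$ on $Z$ one has the coboundary identity $d\bigl(\tfrac{dx}{x^{k}}\wedge \pi^*\gamma\bigr)= -\tfrac{dx}{x^{k}}\wedge \pi^* d\gamma$.

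The isomorphism is then established in the two usual directions. For \textbf{surjectivity} I would construct, from closed representatives $\beta$ on $M$ and $\alpha_0,\dots,\alpha_{m-1}$ on $Z$, a $b^m$-form $\omega = \sum_{i=0}^{m-1}\chi\,\tfrac{dx}{x^{m-i}}\wedge \pi^*\alpha_i + \beta$, where $\chi$ is a cutoff equal to $1$ near $Z$ and supported in $U$. For \textbf{injectivity}, if $\Phi([\omega])=0$ then $\alpha_i=d\gamma_i$ on $Z$ and $\beta=d\mu$ on $M$; the coboundary identity above shows the singular part is $b^m$-exact, while the smooth part is $b^m$-exact because a de Rham-exact smooth form is a $b^m$-coboundary. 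Assembling these exhibits $\omega$ as a $b^m$-coboundary, so $[\omega]=0$.

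An equivalent and arguably cleaner route is induction on $m$ through the short exact sequence of complexes
$$0 \longrightarrow {}^{b^{m-1}}\Omega^\bullet(M) \longrightarrow {}^{b^m}\Omega^\bullet(M) \xrightarrow{\ \mathrm{Res}\ } \Omega^{\bullet-1}(Z) \longrightarrow 0,$$
where $\mathrm{Res}$ extracts the coefficient of the most singular term $\tfrac{dx}{x^m}$ and where the induced differential on the quotient is, up to sign, the de Rham differential on $Z$. The associated long exact sequence reduces the statement to the $b$-case $m=1$, provided the connecting homomorphisms vanish; this splitting then yields ${}^{b^m}H^p \cong {}^{b^{m-1}}H^p \oplus H^{p-1}(Z)$, and iterating gives the claimed direct sum.

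The main obstacle in either approach is \textbf{globalization}. The decomposition (\ref{eqn:laurent}) holds only in $U$, and since $Z$ need not admit a global defining function each step must be patched across $\partial U$ with cutoff functions. The delicate point is controlling the resulting $d\chi$-error terms: in the surjectivity construction one must absorb $d\chi\wedge \tfrac{dx}{x^{m-i}}\wedge \pi^*\alpha_i$, supported in the smooth region, by correcting $\beta$ with a smooth primitive, and in the injectivity argument one must check that these same terms do not spoil exactness; in the inductive formulation this is exactly the vanishing of the connecting maps. Once these error terms are shown to be cohomologically inert, both constructions close up and $\Phi$ is the desired isomorphism.
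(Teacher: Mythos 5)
You should know at the outset that there is no proof in this paper to compare yours against: Theorem \ref{thm:Mazzeo-Melrose} is stated in the preliminaries (Section \ref{Preliminaries}) as a quoted result of \cite{scott}. The only meaningful comparison is therefore with the cited literature, and measured against that your outline is the standard argument: Scott establishes the isomorphism for general $m$ by reading off the classes of the Laurent coefficients of (\ref{eqn:laurent}), and the case $m=1$ is exactly the residue short exact sequence used in \cite{guimipi} and \cite{guimipi2}. Of your two routes, the inductive one is correct and the cleanest as written. The connecting homomorphism of $0 \to {}^{b^{m-1}}\Omega^\bullet(M) \to {}^{b^m}\Omega^\bullet(M) \to \Omega^{\bullet-1}(Z) \to 0$ is represented, for a closed $\alpha$ on $Z$, by $d\chi\wedge\frac{dx}{x^m}\wedge\pi^*\alpha$; this form is supported in $\{d\chi\neq 0\}$, away from $Z$, where for $m\geq 2$ it equals $d\bigl(\frac{1}{(m-1)x^{m-1}}\,d\chi\wedge\pi^*\alpha\bigr)$ and for $m=1$ it equals $-d\bigl(\log|x|\,d\chi\wedge\pi^*\alpha\bigr)$, an exact smooth form. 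So the connecting maps vanish, the resulting short exact sequences of vector spaces split, and iterating ${}^{b^m}H^p(M) \cong {}^{b^{m-1}}H^p(M)\oplus H^{p-1}(Z)$ gives the statement.

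Your first route, however, contains a genuine gap at the well-definedness step. For $m\geq 2$ it is \emph{not} true that changing the defining function by a nonvanishing smooth factor alters the Laurent data only by exact forms: if $x'=fx$ with $f>0$, then $\frac{dx'}{(x')^m}=f^{1-m}\frac{dx}{x^m}+\frac{df}{f^m x^{m-1}}$, and the second term is not even a section of $\Lambda^1({}^{b^m}T^*M)$ (its singularity is not along the $dx$-direction) unless the tangential derivatives of $f$ vanish to order $m-1$ along $Z$. Only for $m=1$ is this term the smooth form $\frac{df}{f}$, which is why the $b$-case is insensitive to the choice. This is precisely the reason Scott's definition of a $b^k$-manifold carries the extra datum of a jet of defining function along $Z$, and why the present paper pins down the structure using the distance function: your map $\Phi$ is well defined only after restricting to defining functions compatible with the fixed $b^m$-structure, and this is also the source of the non-canonicity of the isomorphism. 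A smaller inaccuracy: in your surjectivity step, the smooth primitive you add to absorb the $d\chi$-errors shifts the class of the smooth part by a linear function of the $[\alpha_i]$, so your construction inverts $\Phi$ only up to a unipotent triangular automorphism of $H^p(M)\oplus (H^{p-1}(Z))^m$ --- still enough for bijectivity, but not literally a section of $\Phi$ as phrased.
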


The Moser path method can be generalized to $b^m$-symplectic structures:

\begin{theorem}[\textbf{Moser's path method}, \cite{evajonathanvictor}] Let $\omega_0, \omega_1$ be two $b^m$-symplectic forms on $(M^{2n}, Z)$ with $Z$ compact and
$\omega_0\vert_{{Z}} = \omega_1\vert_{Z}$, then there are neighborhoods $U_0, U_1$ of $Z$ and a $b^m$-symplectomorphism $\varphi: (U_0, Z,
\omega_0) \rightarrow (U_1,Z,  \omega_1)$ such that $\varphi\vert_{Z} = Id.$ \end{theorem}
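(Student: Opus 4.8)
The plan is to run the classical Moser deformation argument inside the $b^m$-de Rham complex, while keeping careful track of the behaviour along $Z$. First I would work in a tubular neighborhood $U \cong Z \times (-\varepsilon, \varepsilon)$ of $Z$ and form the path of closed $b^m$-$2$-forms
$$\omega_t = \omega_0 + t(\omega_1 - \omega_0), \qquad t\in[0,1].$$
The hypothesis $\omega_0\vert_Z = \omega_1\vert_Z$ gives $\omega_t\vert_Z = \omega_0\vert_Z$ for every $t$, so each $\omega_t$ is non-degenerate at every point of $Z$ as an element of $\bigwedge^2({}^{b^m}T_p^*M)$. Since maximality of rank is an open condition and $Z$ is compact, after shrinking $U$ we may assume that $\omega_t$ is $b^m$-symplectic on all of $U$ for every $t\in[0,1]$ simultaneously.

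Next I would produce a primitive adapted to $Z$. The $b^m$-$2$-form $\eta := \omega_1 - \omega_0$ is closed and vanishes at every point of $Z$ (as a $b^m$-form) by hypothesis. The main technical step is a \emph{relative $b^m$-Poincaré lemma}: such an $\eta$ is $b^m$-exact with a primitive that also vanishes along $Z$, that is, there exists $\mu \in {}^{b^m}\Omega^1(U)$ with $d\mu = \eta$ and $\mu_p = 0$ for all $p \in Z$. Here I would exploit the Laurent decomposition (\ref{eqn:laurent}): writing $\eta$ in Laurent form and using that it vanishes along $Z$, a homotopy operator for the retraction of $U$ onto $Z$ yields a primitive built out of the vanishing data, so that the integrand of the homotopy formula vanishes on $Z$ and hence $\mu$ does too.

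Then the Moser equation is solved pointwise. Using non-degeneracy of $\omega_t$, the bundle isomorphism ${}^{b^m}TU \to {}^{b^m}T^*U$, $X \mapsto \iota_X\omega_t$, lets me define the time-dependent $b^m$-vector field $X_t$ by $\iota_{X_t}\omega_t = -\mu$. As a $b^m$-vector field $X_t$ is automatically tangent to $Z$, and since $\mu$ vanishes along $Z$ its tangential part also vanishes there, so $X_t\vert_Z = 0$ as a vector field on $Z$. Integrating $X_t$ (after a further shrinking to a neighborhood $U_0 \subseteq U$ on which the flow exists up to time $1$) produces an isotopy $\varphi_t$ with $\varphi_t(Z) = Z$ and $\varphi_t\vert_Z = \mathrm{Id}$. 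The standard computation
$$\frac{d}{dt}\,\varphi_t^*\omega_t = \varphi_t^*\big(\mathcal{L}_{X_t}\omega_t + (\omega_1 - \omega_0)\big) = \varphi_t^*\big(d\,\iota_{X_t}\omega_t + d\mu\big) = 0$$
(using $d\omega_t = 0$ and Cartan's formula) shows $\varphi_t^*\omega_t = \omega_0$; setting $\varphi = \varphi_1$ and $U_1 = \varphi_1(U_0)$ gives the desired $b^m$-symplectomorphism with $\varphi\vert_Z = \mathrm{Id}$.

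The main obstacle is precisely the relative $b^m$-Poincaré lemma: one must ensure not merely that $\eta$ is $b^m$-exact, but that a primitive can be chosen vanishing along $Z$, since this is exactly what forces $\varphi\vert_Z = \mathrm{Id}$. Controlling the singular $\frac{dx}{x^m}$-part of the Laurent expansion and checking that the homotopy operator preserves both closedness and the vanishing-along-$Z$ condition is the delicate point; the rest is the formal Moser machinery transported verbatim to the $b^m$-complex.
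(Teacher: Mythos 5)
Your overall architecture (linear path, uniform non-degeneracy near the compact $Z$, a primitive vanishing along $Z$, then the Moser flow) is exactly the standard scheme — note that the paper itself does not prove this statement but quotes it from \cite{evajonathanvictor}, so I am comparing against that standard argument. The path step and the flow step of your proposal are fine. The gap is in the step you yourself flag as delicate: the ``relative $b^m$-Poincar\'e lemma'' as you state it is \emph{false} for $m\geq 2$. Pointwise vanishing of $\eta=\omega_1-\omega_0$ along $Z$ as a section of $\bigwedge^2({}^{b^m}T^*M)$ only forces the top coefficient $\alpha_0$ in the Laurent expansion (\ref{eqn:laurent}) to vanish: the intermediate terms $x^i\,\frac{dx}{x^m}\wedge\pi^*(\alpha_i)=\frac{dx}{x^{m-i}}\wedge\pi^*(\alpha_i)$, $1\leq i\leq m-1$, vanish at every point of $Z$ (because $x^i$ does) while being nonzero forms near $Z$, and they carry cohomology. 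Concretely, on $U=Z\times(-\varepsilon,\varepsilon)$ take $\eta=\frac{dx}{x^{m-1}}\wedge\pi^*\alpha$ with $\alpha$ a closed one-form on $Z$, $[\alpha]\neq 0$: this $\eta$ is closed and vanishes at every point of $Z$ as a $b^m$-form, yet by Theorem \ref{thm:Mazzeo-Melrose} its class in ${}^{b^m}H^2(U)\cong H^2(U)\oplus (H^1(Z))^m$ is $[\alpha]\neq 0$, so it admits no $b^m$-primitive at all, let alone one vanishing on $Z$. No homotopy-operator manipulation can circumvent this.

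In fact the same phenomenon shows the theorem is false under your (pointwise) reading of the hypothesis, so no proof could exist: for $m=2$ the forms $\omega_0=\frac{dh}{h^2}\wedge d\theta$ and $\omega_1=\omega_0+\frac{dh}{h}\wedge d\theta$ on $S^1\times(-\varepsilon,\varepsilon)$ agree at every point of $Z=\{h=0\}$, but writing out any diffeomorphism $\varphi$ fixing $Z$ pointwise and imposing $\varphi^*\omega_1=\omega_0$ forces, at first order in $h$, the $\theta$-component $a(\theta)$ of $\varphi$ to satisfy $a'(\theta)=-1$, which has no periodic solution; equivalently, the integral over $Z$ of the coefficient of $\frac{dh}{h}$ jumps from $0$ to $2\pi$, and this number is one of the invariants of Theorem \ref{thm:scott}. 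The hypothesis $\omega_0\vert_Z=\omega_1\vert_Z$ must therefore be understood as equality of the full Laurent data along $Z$ (all the $\alpha_i$, together with the pullback to $Z$ of the smooth parts), which is precisely what makes $\omega_1-\omega_0$ an honest \emph{smooth} de Rham form with zero pullback to $Z$. With that reduction in hand, the rest of your argument closes up and coincides with the standard proof: the classical relative Poincar\'e lemma produces a smooth primitive $\mu$ whose homotopy-formula expression vanishes pointwise on $Z$ (exactly as you argue), the Moser field $X_t=-\omega_t^{-1}(\mu)$ is a $b^m$-vector field vanishing along $Z$, and its time-one flow is the desired germ of $b^m$-symplectomorphism. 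The missing idea is this reduction to a smooth difference; a ``relative $b^m$-Poincar\'e lemma'' in the generality you invoke does not exist.
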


One of the results that follows from the Moser's path method is a local description of a $b^m$-symplectic manifold.

\begin{theorem}[\textbf{$b^m$-Darboux theorem, \cite{evajonathanvictor}}]\label{theorem:Darbouxbn}
Let $\omega$ be a $b^m$-symplectic form on $(M,Z)$ and $p\in Z$. Then we can find a coordinate chart $(U,x_1,y_1,\ldots,x_n,y_n)$ centered at
$p$ such that on $U$ the hypersurface $Z$ is locally defined by $x_1=0$ and
$$\omega=\frac{d x_1}{x_1^m}\wedge {d y_1}+\sum_{i=2}^n d x_i\wedge d y_i.$$
\end{theorem}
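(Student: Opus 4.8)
The plan is to prove the $b^m$-Darboux theorem by combining the $b^m$-Moser path method (already stated) with an explicit algebraic normalization of the restriction $\omega\vert_Z$, reducing everything to a local problem near a single point $p\in Z$.

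First I would work in a neighborhood of $p$ and fix a local defining function $x_1$ for $Z$, so that by the Laurent decomposition \eqref{eqn:laurent} I can write
$$\omega=\frac{dx_1}{x_1^m}\wedge\Big(\sum_{i=0}^{m-1}\pi^*(\alpha_i)\,x_1^i\Big)+\beta,$$
where each $\alpha_i$ is a closed $1$-form on $Z$ and $\beta$ is a smooth $2$-form. The leading coefficient $\alpha_0$ is the obstruction-carrying piece: since $\omega$ is nondegenerate as a $b^m$-form, $\alpha_0\vert_p\neq 0$, so by the (ordinary, smooth) relative Darboux/straightening argument on $Z$ I can choose a coordinate $y_1$ on $Z$ with $\pi^*\alpha_0=dy_1$ near $p$. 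The strategy is then to construct a \emph{model} $b^m$-form
$$\omega_0=\frac{dx_1}{x_1^m}\wedge dy_1+\sum_{i=2}^n dx_i\wedge dy_i$$
in suitable coordinates and show $\omega$ and $\omega_0$ have the same restriction to $Z$, so that the $b^m$-Moser theorem produces the desired $b^m$-symplectomorphism fixing $Z$.

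The key steps, in order, would be: (1) normalize the singular part, absorbing the higher Laurent coefficients $x_1^i\,\pi^*\alpha_i$ for $i\geq 1$ into a change of the defining function $x_1$ (replacing $x_1$ by $x_1(1+c_1 x_1+\cdots)$ kills the subleading terms order by order, since these are smooth multiples of $dx_1$-type contributions once one differentiates $x_1^{-m+i}$), reducing the polar part to $\frac{dx_1}{x_1^m}\wedge dy_1$; (2) on $Z$ itself, apply the classical Darboux theorem to the smooth symplectic form induced by $\omega$ on the symplectic leaf structure of $Z$, obtaining coordinates $x_2,y_2,\dots,x_n,y_n$ in which $\beta\vert_Z=\sum_{i=2}^n dx_i\wedge dy_i$; (3) check that in these coordinates the model $\omega_0$ satisfies $\omega_0\vert_Z=\omega\vert_Z$; (4) invoke the $b^m$-Moser path method to obtain a local $b^m$-symplectomorphism $\varphi$ with $\varphi\vert_Z=\mathrm{Id}$ carrying $\omega$ to $\omega_0$, and compose with the coordinate changes above.

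The main obstacle I expect is step (1): making precise the claim that the higher-order Laurent terms can be removed by an adapted change of defining function while keeping the $b^m$-structure intact and not disturbing the already-normalized leading term. One must verify that the closedness of $\omega$ forces enough compatibility among the $\alpha_i$ (each is closed, and $d\omega=0$ links them) so that the recursive cancellation is consistent, and that the resulting new coordinate is still a genuine $b^m$-defining function with $\vert x_1\vert$ equal to the distance to $Z$ up to the allowed reparametrization. Once the restriction to $Z$ is matched, the remaining work is entirely delegated to the $b^m$-Moser theorem, so the heart of the argument is this algebraic straightening of the polar part near $Z$.
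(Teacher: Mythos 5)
The paper does not actually prove this theorem; it quotes it from \cite{evajonathanvictor}, where it is obtained --- as in your steps (2)--(4) --- by matching $\omega$ with the model form along $Z$ and then invoking the relative $b^m$-Moser theorem. The genuine problem is your step (1), precisely the part you single out as the heart of the argument: the proposed mechanism fails. A substitution $\tilde x_1=x_1(1+c\,x_1)$ gives
\begin{equation*}
\frac{d\tilde x_1}{\tilde x_1^{2}}
=\frac{(1+2c x_1)\,dx_1+x_1^{2}\,dc}{x_1^{2}(1+c x_1)^{2}}
=\frac{dx_1}{x_1^{2}}-\frac{c^{2}\,dx_1}{(1+c x_1)^{2}}+\frac{dc}{(1+c x_1)^{2}},
\end{equation*}
that is, $\frac{dx_1}{x_1^2}$ plus a \emph{smooth} form: the first-order terms cancel identically, so for $m=2$ no $\frac{dx_1}{x_1}\wedge(\cdot)$ correction is ever produced and $\alpha_1$ survives re-expansion with respect to any such $\tilde x_1$. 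This is no accident: the integrals $\int_\gamma\alpha_i$ of the subleading Laurent coefficients are among the invariants in Scott's classification (Theorem \ref{thm:scott}), so they cannot be gauged away by an admissible re-choice of defining function; and for $m\geq 3$ a substitution $x_1\mapsto x_1(1+c_1x_1+\cdots)$ with $c_1$ non-constant along $Z$ changes the $b^m$-tangent bundle itself, so it is not even an operation on the given structure. A concrete instance: for $\omega=\frac{dx_1}{x_1^2}\wedge(1+x_1)\,dy_1$ near $Z=\{x_1=0\}$, what removes the term $\frac{dx_1}{x_1}\wedge dy_1$ is the tangential diffeomorphism $(x_1,y_1)\mapsto\bigl(x_1,\,y_1/(1+x_1)\bigr)$ --- exactly what the Moser flow produces --- not any change of $x_1$.

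Fortunately step (1) is also unnecessary. The hypothesis of the relative Moser theorem concerns only the restriction $\omega\vert_Z$ as a section of $\Lambda^2({}^{b^m}T^*M)\vert_Z$, and the subleading terms $\frac{dx_1}{x_1^{m}}\wedge\pi^*(\alpha_i)x_1^i=x_1^i\cdot\bigl(\frac{dx_1}{x_1^{m}}\wedge\pi^*\alpha_i\bigr)$, $i\geq 1$, are $x_1^i$ times smooth sections of that bundle, hence vanish identically along $Z$. So only $\alpha_0$ and the pullback $\iota_Z^*\beta$ must be matched; the higher coefficients, being exact near $p$, are disposed of by the Moser flow itself. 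With step (1) deleted, the remaining issue is step (2): $Z$ is odd-dimensional, so there is no ``symplectic form induced on $Z$''. What must be normalized is the pair $(\alpha_0,\iota_Z^*\beta)$, both closed, with $\alpha_0\wedge(\iota_Z^*\beta)^{n-1}\neq 0$ (this is the nondegeneracy of $\omega$ along $Z$); leafwise Darboux alone does not give coordinates in which \emph{simultaneously} $\alpha_0=dy_1$ and $\iota_Z^*\beta=\sum_{i=2}^n dx_i\wedge dy_i$. This cosymplectic-type normal form follows by applying Darboux on one leaf and transporting by the flow of the Reeb field $R$ determined by $\iota_R\alpha_0=1$, $\iota_R\,\iota_Z^*\beta=0$. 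After these repairs your steps (3)--(4) do yield the theorem, provided you use a local (near $p$) version of the Moser argument: the statement quoted in the paper assumes $Z$ compact and is semi-global, but the same homotopy-formula proof localizes without difficulty.
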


Another consequence of Moser's path method is a global classification of $b^m$-symplectic surfaces \`{a} la Radko in terms of $b^m$-cohomology classes.

\begin{theorem}[\textbf{Classification of orientable $b^m$-manifolds, \cite{scott}}]\label{thm:scott}
Let $\omega_0$ and $\omega_1$ be two $b^m$-symplectic forms on a compact connected $b^m$-surface $(S,Z)$.
Then, the following conditions are equivalent:
\begin{enumerate}
  \item their $b^m$-cohomology classes coincide $[\omega_0] = [\omega_1]$,
  \item the surfaces are globally $b^m$-symplectomorphic,
  \item the Liouville volumes of $\omega_0$ and $\omega_1$ and the numbers $$\int_{\gamma} \alpha_{i}$$ for all connected
  components $\gamma \subseteq Z$ and all $1 \leq i \leq m$ coincide (where
  $\alpha_{i}$ are the one-forms appearing in the Laurent decomposition
  of the two 2-$b^m$-forms $\omega_0$ and $\omega_1$).
\end{enumerate}
\end{theorem}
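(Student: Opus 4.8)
The plan is to establish the cycle $(1)\Rightarrow(2)\Rightarrow(3)\Rightarrow(1)$, treating the equivalence $(1)\Leftrightarrow(3)$ as cohomological bookkeeping via Mazzeo--Melrose and the implication $(1)\Rightarrow(2)$ as the analytic core, handled by a global Moser path method; the remaining implication $(2)\Rightarrow(3)$ is the formal statement that the invariants are preserved by $b^m$-symplectomorphisms.

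For $(1)\Leftrightarrow(3)$ I would read off both invariants directly from the $b^m$-Mazzeo--Melrose isomorphism (Theorem~\ref{thm:Mazzeo-Melrose}), which for degree $2$ gives $^{b^m}H^2(S)\cong H^2(S)\oplus (H^1(Z))^m$. Since $S$ is compact, connected and orientable, $H^2(S)\cong\RR$ and the $H^2(S)$-component of $[\omega]$ is detected by integration; here one checks that this integral coincides with the Liouville volume $V(\omega)$ of Definition~\ref{lvol}, the regularization built into $V$ being exactly what isolates the smooth De Rham part of the $b^m$-class. Likewise $Z$ is a disjoint union of circles $\gamma$, so $H^1(Z)\cong\bigoplus_\gamma\RR$ with the period maps $[\alpha]\mapsto\int_\gamma\alpha$ as coordinates, and the $m$ copies of $H^1(Z)$ are represented by the closed one-forms $\alpha_i$ of the Laurent decomposition~(\ref{eqn:laurent}). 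Hence $V(\omega)$ together with the numbers $\int_\gamma\alpha_i$ form a complete set of linear coordinates on $^{b^m}H^2(S)$, and two $b^m$-symplectic forms share a $b^m$-cohomology class if and only if all of these invariants agree.

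For the heart of the argument, $(1)\Rightarrow(2)$, I would run the global Moser path method. Assuming $[\omega_0]=[\omega_1]$, I consider the linear path $\omega_t=(1-t)\omega_0+t\omega_1$ for $t\in[0,1]$. Granting non-degeneracy of each $\omega_t$ (discussed below), equality of the $b^m$-cohomology classes lets me write $\omega_1-\omega_0=d\mu$ for a $b^m$-one-form $\mu$, and I then define a time-dependent vector field $X_t$ by $\iota_{X_t}\omega_t=-\mu$. The crucial point is that, because $\omega_t$ is a non-degenerate $b^m$-form, the contraction $X\mapsto\iota_X\omega_t$ is an isomorphism $^{b^m}TS\to{}^{b^m}T^*S$, so $X_t$ is a genuine $b^m$-vector field; in particular it is tangent to $Z$ and, $S$ being compact, its flow $\phi_t$ exists for all $t\in[0,1]$, fixes $Z$ setwise, and is a $b^m$-diffeomorphism. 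Since $\omega_t$ is closed, the Cartan computation $\frac{d}{dt}\phi_t^*\omega_t=\phi_t^*\big(d\,\iota_{X_t}\omega_t+d\mu\big)=\phi_t^*(-d\mu+d\mu)=0$ yields $\phi_1^*\omega_1=\omega_0$, so $\phi_1$ is the required $b^m$-symplectomorphism.

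The main obstacle is precisely the non-degeneracy of the whole path $\omega_t$. On a surface $\bigwedge^2({}^{b^m}T^*S)$ is a line bundle, so non-degeneracy of a $b^m$-$2$-form is equivalent to its non-vanishing as a section, and a convex combination of two nowhere-zero sections stays nowhere-zero provided they induce the same orientation. Under hypothesis $(1)$ this compatibility is automatic: the common sign of the Liouville volume forces $\omega_0$ and $\omega_1$ to induce the same orientation on $S\setminus Z$, while equality of the leading Laurent periods $\int_\gamma\alpha_0$ forces the leading coefficients (nonvanishing one-forms on the circles $\gamma$, as the $b^m$-Darboux theorem~\ref{theorem:Darbouxbn} makes explicit) to induce the same orientation on $Z$; together these keep every $\omega_t$ non-degenerate. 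Finally, to close the cycle it remains to verify the essentially formal implication $(2)\Rightarrow(3)$: any $b^m$-symplectomorphism maps $Z$ to $Z$ and pulls back the Laurent data, hence preserves both the Liouville volume (diffeomorphism invariant by Definition~\ref{lvol}) and the periods $\int_\gamma\alpha_i$, up to the induced permutation of the components of $Z$. Combined with $(3)\Rightarrow(1)$ this completes the proof.
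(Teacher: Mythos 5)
Your three-step architecture --- Mazzeo--Melrose bookkeeping for $(1)\Leftrightarrow(3)$, a linear Moser path for $(1)\Rightarrow(2)$, and formal invariance for $(2)\Rightarrow(3)$ --- is the right one; it is the approach of \cite{scott}, and it is also how this paper proves its own equivariant analogue (Theorem \ref{emt}), since Theorem \ref{thm:scott} itself is only quoted here from \cite{scott}. The gap is in the step you correctly single out as the crux: non-degeneracy of the path $\omega_t=(1-t)\omega_0+t\omega_1$ away from $Z$. Your justification --- ``the common sign of the Liouville volume forces $\omega_0$ and $\omega_1$ to induce the same orientation on $S\setminus Z$'' --- is false. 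The Liouville volume of Definition \ref{lvol} (and its regularized version needed when $m\geq 2$) is a principal-value/renormalized quantity, and its sign does not control pointwise signs: regularization is not monotone (for $m$ even, $\int_\epsilon^1 x^{-2}\,dx=\epsilon^{-1}-1$ regularizes to $-1$, so an everywhere-positive $b^{2k}$-form can have negative regularized volume), and for $m$ odd a $b^m$-symplectic form necessarily induces opposite orientations on the two sides of each circle of $Z$, so there is no single ``orientation of $S\setminus Z$'' for one number to detect. Concretely, on the $b$-sphere of Section \ref{toyexamples}, $\omega=\frac{1}{h}\,dh\wedge d\theta$ and $-\omega$ both have Liouville volume $0$, yet they induce opposite orientations on each hemisphere; so equality of Liouville volumes cannot pin down the sign pattern on the components of $S\setminus Z$.

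The missing idea is that the sign pattern must be read off from the invariants you deploy only along $Z$, and then propagated inward. Fix one (semi)local defining function $x$ used in the Laurent expansions (\ref{eqn:laurent}) of both forms. Near a circle $\gamma\subseteq Z$ the dominant term of $\omega_j$ is $\frac{dx}{x^m}\wedge\alpha_0^{(j)}$, and $\alpha_0^{(j)}$ is a nowhere-vanishing $1$-form on $\gamma$, so $\int_\gamma\alpha_0^{(j)}\neq 0$ and its sign equals the sign of $\alpha_0^{(j)}$; hence equal leading periods force $\alpha_0^{(0)}$ and $\alpha_0^{(1)}$ to have the same sign, and therefore force $\omega_0$ and $\omega_1$ to have the same sign at points close to $\gamma$ on each side of $\gamma$. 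Since every connected component of $S\setminus Z$ is connected, carries a constant sign for each $\omega_j$, and (when $Z\neq\emptyset$; the case $Z=\emptyset$ is Moser's classical theorem) has closure meeting $Z$, equality of the periods $\int_\gamma\alpha_0$ alone already gives agreement of signs on all of $S\setminus Z$ --- the Liouville volume plays no role in non-degeneracy, only in matching the smooth part of the class. With this replacement, your argument (convex combinations of same-sign sections off $Z$, plus non-vanishing of $(1-t)\alpha_0^{(0)}+t\alpha_0^{(1)}$ on $Z$) does give non-degeneracy of every $\omega_t$, and the remainder of your proof is correct.
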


\section{ Toy examples of $b^m$-symplectic surfaces }\label{toyexamples}

In this section we describe some examples of orientable and non-orientable $b^m$-symplectic surfaces.
\begin{enumerate}
\item \textbf{A $b^{m}$-symplectic structure on the sphere:}\label{ex:sphere}
Consider the sphere $S^2$ with the $b^m$-symplectic form $\omega = \frac{1}{h^m}dh\wedge d\theta$, where $h$ stands for the height and $\theta$ for the angular coordinate. Observe that this form has the equator as the critical set $Z$.
\item \textbf{A $b^{m}$-symplectic structure on the torus:}\label{ex:torus} Consider $\mathbb{T}^2$ as quotient of the plane ($\mathbb{T}^2 = \{(x, y) \in (\mathbb{R}/\mathbb Z)^2\}$).
Let $\omega = \frac{1}{(\sin 2\pi y)^n}dx\wedge dy$ be a $b^m$-symplectic structure on $\mathbb R^2$. The action of $\mathbb Z^2$ leaves this form invariant and therefore this $b^m$-form descends to the quotient. Observe that this $b^m$-form defines $Z = \{y \in \{0,\frac{1}{2}\}\}$.

\begin{figure}[h!]
    \centering

    \begin{tikzpicture}[scale = 0.35]
\draw [rotate = 0,black, line width=1pt] (-7,0) ellipse (5cm and 3cm);
\draw [black, line width=1pt] plot [smooth, tension=1.3] coordinates { (-9,.5)(-7,-.5)(-5,.5) };
\draw [black, line width=1pt] plot [smooth, tension=1.3] coordinates { (-8.5,0)(-7,.5)(-5.5,0) };

\draw [magenta, line width=1pt] plot [smooth, tension=1.3] coordinates { (-7,-.5)(-7.5,-1.75)(-7,-3)};
\draw [magenta, dashed, line width=1pt] plot [smooth, tension=1.3] coordinates { (-7,-.5)(-6.5,-1.75)(-7,-3)};
\draw [magenta, line width=1pt] plot [smooth, tension=1.3] coordinates { (-7,.5)(-7.5,1.75)(-7,3)};
\draw [magenta, dashed, line width=1pt] plot [smooth, tension=1.3] coordinates { (-7,.5)(-6.5,1.75)(-7,3)};

\end{tikzpicture}

    \caption{Example: $b^{m}$-symplectic structure in the torus.}
    \label{fig:torus}
\end{figure}
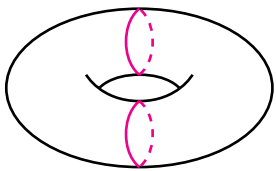

\item \textbf {A $b^{2k+1}$-symplectic structure on the projective space:}\label{example:projective}  Consider the previous example of $b^m$-symplectic form on the sphere and consider the antipodal action on it.  Observe that $\omega$ is invariant by the action for $m=2k+1$, yielding  a $b^{2k+1}$-symplectic form in $\mathbb{RP}^2$
with critical set $Z$, that is  the equator modulo the antipodal identification (thus diffeomorphic to $S^1$). Observe that a neighborhood of $Z$ is diffeomorphic to the Moebius band.

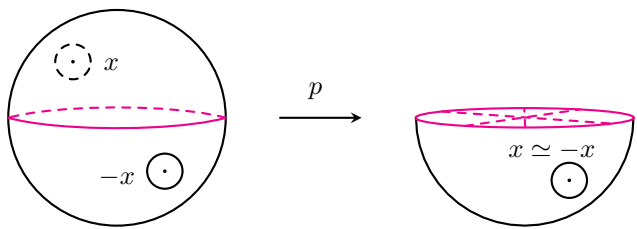
\begin{figure}[h!]
\centering

\begin{tikzpicture}
[thick, scale=0.55, line cap=round,line join=round,>=triangle 45,x=.65cm,y=.65cm]
\clip(-6,-4.1) rectangle (22,4.1);
\draw[dashed](-1.62,2.08) circle (0.6511528238439883 );
\draw(1.76,-1.98) circle (0.6511528238439883 );
\draw(16.64,-2.32) circle (0.6511528238439883 );
\draw(0.,0.) circle (4. );
\draw [->, >=stealth] (6.,0.) -- (9.,0.);
\draw [shift={(15.,0.)}] plot[domain=3.141592653589793:6.283185307179586,variable=\t]({1.*4.*cos(\t r)+0.*4.*sin(\t r)},{0.*4.*cos(\t r)+1.*4.*sin(\t r)});
\draw [color=magenta][rotate around={0.:(14.999999999999828,0.)}] (14.999999999999828,0.) ellipse (4.016167727329326  and 0.3600044639249609 );
\draw [dashed, color=magenta](12.799930602626697,-0.3011823446063568)-- (17.040532763637565,0.31007549425931386);
\draw [dashed, color=magenta](15.019946575672243,-0.36000002381646556)-- (14.98,0.36);
\draw [dashed, color=magenta](11.944866403555565,0.2336765444470393)-- (18.221799278450536,-0.21493905572999872);
\draw[color=black] (7.3,1) node {$p$};
\draw [fill=black] (-1.62,2.08) circle (0.5pt);
\draw[color=black] (-0.2,2) node {$x$};
\draw [fill=black] (1.76,-1.98) circle (0.5pt);
\draw[color=black] (0,-2.2) node {$-x$};
\draw [fill=black] (16.64,-2.32) circle (0.5pt);
\draw[color=black] (16,-1.2) node {$x \simeq -x$};

  \coordinate (P) at ($(0, 0) + (30:3cm and 2cm)$);
  \draw[dashed,color=magenta] ($(0, 0) + (30:3cm and 0cm)$(P) arc
  (30:150:3cm and 0.5cm);
  \draw[color=magenta] ($(0, 0) + (30:3cm and 0cm)$(P) arc
  (30:150:3cm and -0.5cm);

\end{tikzpicture}

\caption{The $b^{2k+1}$-symplectic structure on the sphere $S^2$ that vanishes at the equator induces a $b^{2k+1}$-symplectic structure on the projective space $\mathbb{RP}^2$.}
\label{fig:my_label}
\end{figure}

\item \textbf{ A $b^{2k+1}$-symplectic structure on a Klein bottle:} 
Consider the Torus with the structure given in the previous example.


 Consider the group action $\mathbb{Z}/2\mathbb{Z}$ that acts on $(x,y) \in \mathbb{T}^2$ by $\text{Id}\cdot(x,y) = (x,y)$ and
$\text{-Id}\cdot(x,y) =(1-x,y)$. The orbit space by this action is the Klein bottle.

Thus, the $b^m$-symplectic form $\omega = \frac{dx}{(\sin 2\pi x)^m}\wedge dy$ induces a $b^m$-symplectic structure in $T$ if $\omega$ is invariant by the action of the group. Let $\rho_{-\text{id}}$ denote morphism induced by the action of ${-\text{id}}$.

\begin{tabular}{rcl}
$\displaystyle\rho_{-\text{id}}^{*}\omega$& $=$  & $\displaystyle\rho_{-\text{id}}^{*}\left(\frac{dx}{(\sin 2\pi x)^m}\wedge y\right) = \displaystyle \frac{d(1-x)}{(\sin(2\pi- 2\pi x))^m}\wedge dy$\\
 & $=$ & $\displaystyle\frac{-dx}{(-1)^m\sin 2\pi x} \wedge dy.$\\
\end{tabular}

Thus, $\omega$ is invariant if and only if $m$ is odd, and in this case we have constructed an example of $b^m$-symplectic structure in the Klein bottle.
\end{enumerate}

\begin{remark}
The previous examples only exhibit  $b^{2k+1}$-symplectic structures on non-orientable surfaces. As we will see in Section \ref{sec:constructions} only orientable surfaces can admit $b^{2k}$-symplectic structures.
\end{remark}
\section{Equivariant classification of $b^m$-surfaces. Non-orientable $b^m$-surfaces.}\label{sec:nonorientable}

In this section we give an equivariant Moser theorem for $b^m$-symplectic manifolds. This yields the classification of non-orientable surfaces thus extending the classification theorems of Radko and Scott for non-orientable surfaces.

 A classification of $b$-symplectic surfaces \`{a} la Moser was obtained in  \cite{guimipi} in terms of the class  determined by the $b$-symplectic structures in $b$-cohomology. This result resembles the one of Moser for the global  classification of symplectic surfaces using De Rham cohomology classes. The class in $b$-cohomology determines, in its turn, the period of the modular vector field along the connected components (curves) of the singular locus.

We now extend the classification result for manifolds admitting a group action leaving the $b^m$-symplectic structure invariant.

\begin{theorem}[\textbf{Equivariant $b^m$-Moser theorem for surfaces}]\label{emt} Suppose that $S$ is a compact surface, let $Z$ be a union of non-intersecting curves and let  $\omega_0$ and $\omega_1$ be two $b^m$-symplectic structures on $(S,Z)$ which are invariant under the action of a compact Lie group $\rho:G\times S\longrightarrow S$ and defining the same $b^m$-cohomology class,  $[\omega_0]=[\omega_1]$. Then, there exists an equivariant diffeomorphism $\gamma_1:S\rightarrow S$,  such that $\gamma_1$ leaves $Z$ invariant and satisfies $\gamma_1^*\omega_1 = \omega_0$.
\end{theorem}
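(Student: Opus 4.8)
The plan is to adapt the classical Moser path method to the equivariant $b^m$-setting, exploiting the fact that a compact Lie group action admits an invariant Riemannian metric, hence the distance function to $Z$ and all averaging operations can be taken $G$-invariant. Since $[\omega_0]=[\omega_1]$ in $b^m$-cohomology, the form $\omega_1-\omega_0$ is $b^m$-exact, so I can write $\omega_1-\omega_0 = d\mu$ for some $b^m$-one-form $\mu$. The key point is to produce such a primitive $\mu$ that is moreover $G$-invariant: starting from any primitive, I would average it over $G$ using the Haar measure, $\bar\mu = \int_G \rho_g^*\mu\,dg$, and because the $\omega_i$ are $G$-invariant and $d$ commutes with averaging, $d\bar\mu = \omega_1-\omega_0$ still holds. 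This gives an invariant primitive at no cost.

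First I would set $\omega_t = \omega_0 + t(\omega_1-\omega_0) = \omega_0 + t\,d\bar\mu$ for $t\in[0,1]$, and verify that each $\omega_t$ is a genuine $b^m$-symplectic form. Non-degeneracy in the interior follows from the interpolation being symplectic there, but the delicate requirement is non-degeneracy as a $b^m$-form along $Z$; here I would invoke the hypothesis encoded in the equality of the Laurent coefficients (via the cohomology class together with Theorem~\ref{thm:scott}'s invariants) to ensure that the leading $\frac{dx}{x^m}\wedge(\cdots)$ part of $\omega_t$ remains non-vanishing for all $t$, so that $\omega_t$ stays $b^m$-symplectic throughout the path, possibly after shrinking to a tubular neighborhood of $Z$ and using compactness of $S$. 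Then I define the time-dependent $b^m$-vector field $X_t$ by the Moser equation $\iota_{X_t}\omega_t = -\bar\mu$, which has a unique solution precisely because $\omega_t$ is non-degenerate as a $b^m$-form; crucially $X_t$ is a $b^m$-vector field (tangent to $Z$ with the prescribed order of vanishing), so its flow preserves $Z$.

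The heart of the equivariance is that, since both $\omega_t$ and $\bar\mu$ are $G$-invariant and $\omega_t$ is non-degenerate, the defining equation $\iota_{X_t}\omega_t=-\bar\mu$ forces $X_t$ to be $G$-invariant, i.e. $\rho_g$-related to itself for every $g\in G$. Consequently its time-one flow $\gamma_1$ is an equivariant diffeomorphism. The standard Moser computation $\frac{d}{dt}\gamma_t^*\omega_t = \gamma_t^*(\mathcal{L}_{X_t}\omega_t + \frac{d}{dt}\omega_t) = \gamma_t^*(d\iota_{X_t}\omega_t + d\bar\mu) = \gamma_t^*(-d\bar\mu + d\bar\mu) = 0$ shows $\gamma_1^*\omega_1 = \omega_0$, and since $X_t$ is tangent to $Z$ the flow leaves $Z$ invariant.

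\textbf{The main obstacle} I anticipate is the analytic control of the $b^m$-vector field $X_t$ near $Z$: one must guarantee that the solution of $\iota_{X_t}\omega_t=-\bar\mu$ is a genuine smooth $b^m$-vector field (vanishing to order $m$ along $Z$ in the appropriate sense) and that its flow is complete and defined for all $t\in[0,1]$, uniformly along the compact $Z$. This is exactly where the $b^m$-Darboux theorem (Theorem~\ref{theorem:Darbouxbn}) and the Laurent decomposition become essential, reducing the verification to a local model in which the leading $\frac{dx_1}{x_1^m}\wedge dy_1$ term can be controlled explicitly; the global statement then follows by patching with an invariant partition of unity subordinate to an equivariant tubular neighborhood of $Z$ and invoking compactness of $S$ to ensure completeness of the flow.
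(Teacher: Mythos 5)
Your proposal is correct and follows essentially the same route as the paper's proof: a linear Moser path $\omega_t$, exactness of $\omega_1-\omega_0$ from the $b^m$-cohomology hypothesis, Haar averaging to produce a $G$-invariant primitive, and the resulting invariant $b^m$-vector field (tangent to $Z$) whose flow yields the equivariant diffeomorphism with $\gamma_1^*\omega_1=\omega_0$. The only, immaterial, difference is the order of operations: you average the primitive first and let uniqueness of the solution of $\iota_{X_t}\omega_t=-\bar\mu$ force invariance of $X_t$, whereas the paper solves Moser's equation with an arbitrary primitive and then averages both the primitive and the vector field.
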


\begin{proof} Denote by $\rho: G\times S\longrightarrow S$ the group action and  denote by $\rho_g$ the induced diffeomorphism for a fixed $g\in G$. i.e., $\rho_g(x):=\rho(g,x)$.
 Consider the linear family of $b^m$-forms $\omega_s=s \omega_1+ (1-s)\omega_0$. Since the manifold is a surface, the fact that $\omega_0$ and $\omega_1$ are non-degenerate $b^m$-forms and of the same sign on $S\setminus Z$  (thus non-vanishing sections of $\Lambda^2 (^b T^*(S))$) implies that the linear path is non-degenerate too.
We will prove that there exists a family
$\gamma_s:S\rightarrow S$, with $0\leq s\leq1$ such that

 \begin{equation}\label{gammat}\gamma_s^*\omega_s=\omega_0.\end{equation}
This is equivalent to the following equation,\begin{equation}\label{diffagammat}\cL_{X_s}\omega_s=\omega_0-\omega_1,\end{equation}
where $X_s=\frac{d\gamma_s}{ds}\circ\gamma_s^{-1}$.

 Since the cohomology class of both forms coincide, $\omega_1-\omega_0= d\alpha$ for $\alpha$ a $b^m$-form of degree $1$.

 Therefore equation (\ref{diffagammat}) becomes \begin{equation}\label{vt}\iota_{X_s}\omega_s=-\alpha.\end{equation}

This equation has a unique solution $X_s$ because  $\omega_s$ is $b^m$-symplectic and therefore it is non-degenerate. Furthermore, the solution is a $b^m$-vector field. From this solution we will construct an equivariant solution such that its $s$-dependent flow gives an equivariant diffeomorphism.

Since the forms $\omega_0$ and $\omega_1$ are $G$-invariant, we can find a $G$-invariant primitive $\tilde\alpha$ by averaging with respect to a Haar measure the initial form  $\alpha$: $\tilde\alpha=\int_G \rho_g^*(\alpha) d\mu$ and therefore
the invariant vector field,

$$X_s^G=\int_G \rho_{g_*}(X_s) d\mu$$ is a solution of the equation,

\begin{equation}\iota_{X_s^G}\omega_s=-\tilde{\alpha}.
\end{equation}
We can get an equivariant $\gamma_s^G$ by integrating $X_s^G$. This family satisfies $\gamma_t^{G*}\omega_t = \omega_0$ and it is equivariant.

Also observe that since $X_t^G$ is tangent to $Z$, this diffeomorphism preserves $Z$.

\end{proof}

A non-orientable manifold can be seen as a pair $(\tilde M, \rho)$ with $\tilde M$ the orientable covering and $\rho$  the action given by deck-transformations of $\mathbb Z/2\mathbb Z$ on $\tilde M$. This perspective is very convenient for classification issues because equivariant mappings on the orientable covering yield actual diffeomorphism on the non-orientable manifolds. We adopt this point of view to provide a classification theorem for non-orientable $b^m$-surfaces in cohomological terms.

\begin{corollary}\label{non-orientable}
Let $S$ be a non-orientable compact surface and let $\omega_1$ and $\omega_2$ be two $b^m$-symplectic forms on $S$. Assume $[\omega_1] = [\omega_2]$ in $b^m$-cohomology then  $(S,\omega_1)$ is equivalent to $(S,\omega_2)$, i.e., there exists a diffeomorphism $\varphi:S \rightarrow S$ such that $\varphi^{*}\omega_2 = \omega_1$.
\end{corollary}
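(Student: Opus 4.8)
The plan is to exhibit the non-orientable surface $S$ as the quotient $\tilde S / (\mathbb Z/2\mathbb Z)$, where $\tilde S$ is the orientable double cover and the $\mathbb Z/2\mathbb Z$-action $\rho$ is generated by the nontrivial deck transformation $\sigma$. Accordingly, the first step is to pull back the given $b^m$-symplectic forms: let $p:\tilde S \to S$ be the covering projection and set $\tilde\omega_i = p^*\omega_i$ for $i = 1,2$. Because $p$ is a local diffeomorphism and $Z$ lifts to a union of non-intersecting curves $\tilde Z = p^{-1}(Z)$, each $\tilde\omega_i$ is a $b^m$-symplectic form on the orientable $b^m$-surface $(\tilde S, \tilde Z)$. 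Moreover, since the $\omega_i$ are genuine forms downstairs, their pullbacks are automatically invariant under the deck transformation: $\sigma^*\tilde\omega_i = \sigma^* p^* \omega_i = (p\circ\sigma)^*\omega_i = p^*\omega_i = \tilde\omega_i$, using $p\circ\sigma = p$. Thus both $\tilde\omega_1$ and $\tilde\omega_2$ are $G$-invariant $b^m$-symplectic forms for the compact group $G = \mathbb Z/2\mathbb Z$.

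The second step is to check the cohomological hypothesis upstairs. The assumption $[\omega_1] = [\omega_2]$ in $b^m$-cohomology of $S$ must be transported to $\tilde S$: since $p^*$ is a chain map on the $b^m$-de Rham complexes, it induces a map on $b^m$-cohomology, and functoriality gives $[\tilde\omega_1] = [p^*\omega_1] = p^*[\omega_1] = p^*[\omega_2] = [\tilde\omega_2]$. Hence $\tilde\omega_1$ and $\tilde\omega_2$ define the same $b^m$-cohomology class on the compact orientable surface $\tilde S$, and they are both invariant under the compact Lie group action $\rho$.

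With all hypotheses of the Equivariant $b^m$-Moser theorem (Theorem~\ref{emt}) verified, the third step is simply to invoke it. That theorem produces an equivariant diffeomorphism $\gamma_1:\tilde S \to \tilde S$, leaving $\tilde Z$ invariant, with $\gamma_1^*\tilde\omega_2 = \tilde\omega_1$. Equivariance means $\gamma_1 \circ \sigma = \sigma \circ \gamma_1$, so $\gamma_1$ descends along the quotient: it maps $\sigma$-orbits to $\sigma$-orbits and therefore induces a well-defined diffeomorphism $\varphi:S \to S$ satisfying $p \circ \gamma_1 = \varphi \circ p$. The final step is to verify that $\varphi$ does the job. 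Since $p^*(\varphi^*\omega_2) = \gamma_1^*(p^*\omega_2) = \gamma_1^*\tilde\omega_2 = \tilde\omega_1 = p^*\omega_1$, and $p^*$ is injective (as $p$ is a surjective local diffeomorphism), we conclude $\varphi^*\omega_2 = \omega_1$, as desired.

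The main obstacle, and the only place demanding genuine care rather than bookkeeping, is ensuring that the equivariant diffeomorphism $\gamma_1$ genuinely descends to a \emph{smooth} diffeomorphism on $S$ and that its descent interacts correctly with the $b^m$-structure near $Z$; the algebraic argument $\gamma_1\circ\sigma = \sigma\circ\gamma_1$ guarantees a continuous descent, but one should confirm smoothness using that $p$ is a covering and that the deck action is free and properly discontinuous, so the quotient is a smooth manifold and descent of $\sigma$-equivariant smooth maps is smooth. Everything else is functoriality of $p^*$ on $b^m$-cohomology together with injectivity of $p^*$, both of which follow from $p$ being a local diffeomorphism respecting the critical hypersurfaces.
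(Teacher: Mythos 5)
Your proposal is correct and follows essentially the same route as the paper: lift both forms to the orientable double cover, apply the equivariant $b^m$-Moser theorem (Theorem~\ref{emt}) with $G=\mathbb{Z}/2\mathbb{Z}$ acting by deck transformations, and push the resulting equivariant diffeomorphism down to $S$. If anything, your write-up is tidier than the paper's, since you verify the $\sigma$-invariance of the pulled-back forms and the identity $\varphi^{*}\omega_2=\omega_1$ (via injectivity of $p^{*}$) explicitly, whereas the paper first invokes the non-equivariant Moser theorem and then appeals to Theorem~\ref{emt} to repair equivariance.
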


\begin{proof}

Let $p:\tilde{S}\rightarrow S$ be a covering map, and $\tilde{S}$ the orientation double cover.
Since $[\omega_1] = [\omega_2]$  then $[p^*(\omega_1)] = [p^*(\omega_2)]$. Because
$\tilde{S}$ is orientable we may use the $b^m$-Moser theorem  in order to guarantee
the existence of a symplectomorphism $\tilde{\varphi}:(\tilde{S}, p^{*}(\omega_1))
\rightarrow (\tilde{S}, p^{*}(\omega_2))$.
Consider the following diagram:

\begin{equation}\label{CommutativeDiagram}
\begin{tikzpicture}
  \matrix (m) [matrix of math nodes,row sep=3em,column sep=4em,minimum width=2em]
  {
     (\tilde{S}, p^{*}(\omega_1)) & (\tilde{S}, p^{*}(\omega_2)) \\
     (S, \omega_1) & (S,\omega_2) \\};
  \path[-]
    (m-1-1) edge[->] node [left] {$p$} (m-2-1)
            edge[->] node [below] {$\tilde{\varphi}$} (m-1-2)
    (m-1-2) edge[->] node [right] {$p$} (m-2-2);

    \path[dashed]
    (m-2-1.east|-m-2-2) edge[->] node [below] {$\varphi$}
    (m-2-2);

\end{tikzpicture}
\end{equation}


In order that there exists a symplectomorphism $\varphi$ that make the diagram commute above commute we apply the universal property of the quotient to $p \circ \tilde{\varphi}$: there exists a unique $\varphi$ making the diagram commute if and only if the images by $p \circ \tilde{\varphi}$ of the identified points coincide. This is equivalent to requiring that the images of $p$ by $\tilde{\varphi}$ are sent to the orbit of $p$ and this is true because $\tilde{\varphi}(gp) = g\tilde{\varphi}(p)$ as a consequence of  Theorem \ref{emt} (for $G = \mathbb{Z}/2\mathbb{Z}$). It is possible to apply this theorem since the symplectomorphism between $p^{*}\omega_1$ and $p^{*}\omega_2$ given by $b^m$-Moser theorem, yields a family of forms with invariant $b^m$-cohomology class.
\end{proof}

A similar equivariant $b^m$-Moser theorem as theorem \ref{emt} holds for higher dimensions. In that case we need to require that there exists a path $\omega_t$ of $b^m$-symplectic structures connecting $\omega_0$ and $\omega_1$, which is not true in general \cite{mcduff}. The proof follows the same lines  as Theorem \ref{emt}. Such a result was already proved for $b$-symplectic manifolds (see Theorem 8 in \cite{gmps}).

\begin{theorem}[\textbf{Equivariant $b^m$-Moser theorem}]\label{emt2} Let $M$ be a compact manifold and let $Z$ be a smooth hypersurface. Consider $\omega_t$ for $0\leq t \leq 1$,  a smooth family of $b^m$-symplectic forms on $(M,Z)$  such that the $b^m$-cohomology class $[\omega_t]$ does not depend on $t$.

 Assume that the family of $b^m$-symplectic structures is invariant by the action of a compact Lie group $G$ on $M$, then, there exists a family of  equivariant diffeomorphisms $\phi_t:M\rightarrow M$, with $0\leq t \leq 1$ such that $\phi_t$ leaves $Z$ invariant and satisfies $\phi_t^*\omega_t = \omega_0$.
\end{theorem}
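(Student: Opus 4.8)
The plan is to run the same Moser deformation argument as in Theorem \ref{emt}, but now starting from a prescribed smooth family $\omega_t$ rather than the linear interpolation, and to keep the construction $G$-equivariant throughout by averaging over the compact group. First I would use the hypothesis that the $b^m$-cohomology class $[\omega_t]$ is constant in $t$ to produce a primitive for $\frac{d}{dt}\omega_t$. Differentiating the cohomological constancy gives $\frac{d}{dt}[\omega_t]=0$, so $\frac{d}{dt}\omega_t$ is an exact $b^m$-form; I would then invoke a smooth (in $t$) version of the Poincar\'e lemma for the $b^m$-de Rham complex to write $\frac{d}{dt}\omega_t=d\alpha_t$ for a smooth family of $b^m$-forms $\alpha_t$ of degree $1$. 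Making this choice of $\alpha_t$ depend smoothly on $t$ is the one genuinely new point compared with Theorem \ref{emt}, where the two-term linear path made the primitive manifestly $t$-independent.

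Next I would set up the Moser equation. Seeking $\phi_t$ with $\phi_t^*\omega_t=\omega_0$ and $\phi_0=\mathrm{Id}$, differentiating in $t$ reduces, exactly as in the surface case, to $\mathcal{L}_{X_t}\omega_t=-\frac{d}{dt}\omega_t$, and using $d\omega_t=0$ together with Cartan's formula this becomes the pointwise algebraic equation
\begin{equation}\label{eq:emt2moser}
\iota_{X_t}\omega_t=-\alpha_t.
\end{equation}
Because each $\omega_t$ is $b^m$-symplectic, hence non-degenerate as a section of $\Lambda^2(\,^{b^m}T^*M)$, equation \eqref{eq:emt2moser} has a unique solution $X_t$, and this solution is a genuine $b^m$-vector field depending smoothly on $t$; in particular it is tangent to $Z$.

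To make the flow equivariant I would replace $\alpha_t$ and $X_t$ by their $G$-averages. Since $G$ is compact and leaves each $\omega_t$ invariant, averaging against the Haar measure gives an invariant primitive $\tilde\alpha_t=\int_G\rho_g^*(\alpha_t)\,d\mu$ and a corresponding invariant vector field $X_t^G=\int_G\rho_{g_*}(X_t)\,d\mu$ solving $\iota_{X_t^G}\omega_t=-\tilde\alpha_t$, precisely as in the proof of Theorem \ref{emt}. The family $X_t^G$ is still smooth in $t$ and tangent to $Z$. The final step is integration: because $M$ is compact, the time-dependent vector field $X_t^G$ integrates to a well-defined isotopy $\phi_t:M\to M$ for all $t\in[0,1]$, the equivariance of $X_t^G$ forces each $\phi_t$ to be equivariant, and tangency to $Z$ guarantees $\phi_t(Z)=Z$. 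By construction $\phi_t^*\omega_t=\omega_0$, which is the claim.

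The main obstacle I anticipate is the first step: upgrading the constancy of the cohomology class to a primitive $\alpha_t$ that is \emph{smooth} in $t$. This requires a parametrized $b^m$-Poincar\'e/homotopy-operator argument (a chain-homotopy producing $d\alpha_t=\frac{d}{dt}\omega_t$ with $\alpha_t$ depending smoothly on $t$), which in the ordinary de Rham setting is standard but in the $b^m$-de Rham complex must be justified via the Laurent decomposition \eqref{eqn:laurent} and the $b^m$-Mazzeo--Melrose isomorphism of Theorem \ref{thm:Mazzeo-Melrose}. Everything downstream of that---non-degeneracy solving \eqref{eq:emt2moser}, $G$-averaging, and compactness guaranteeing global integrability---is routine and parallels Theorem \ref{emt} verbatim.
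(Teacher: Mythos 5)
Your proposal is correct and takes essentially the same approach as the paper, which in fact does not write out a proof of Theorem \ref{emt2} at all: it simply states that the proof ``follows the same lines as Theorem \ref{emt}'' (Moser equation, $G$-averaging of the primitive and of the resulting $b^m$-vector field, then integration), citing Theorem 8 of \cite{gmps} for the $b$-symplectic case. Your extra attention to choosing the primitive $\alpha_t$ smoothly in $t$ (via a parametrized homotopy operator in the $b^m$-complex) addresses precisely the point the paper glosses over when passing from the linear path of Theorem \ref{emt} to a general family, and your treatment of it is sound.
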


\section{Constructions and classification of $b^m$-symplectic structures}\label{sec:constructions}

In this section we describe constructions of $b^m$-symplectic structures on surfaces. We start associating a graph to a $b$-manifold $(M,Z)$.

\subsection{$b$-Graphs}

Let us start this section with some definitions.

\begin{definition}\label{def:graph}
We define the associated graph to a $b$-manifold $(M,Z)$, as the graph with set of vertices given by the connected components of $S\setminus Z$ and with edges connecting two vertices when the connected components associated to them $U_1$ and $U_2$, $\partial U_1 \cap  \partial U_1 \neq \{0\}$.
\end{definition}

The associated graph to a $b$-manifold can contain loops and double edges (that is an edge can connect the same vertex). See Figure \ref{fig:graphs}. 

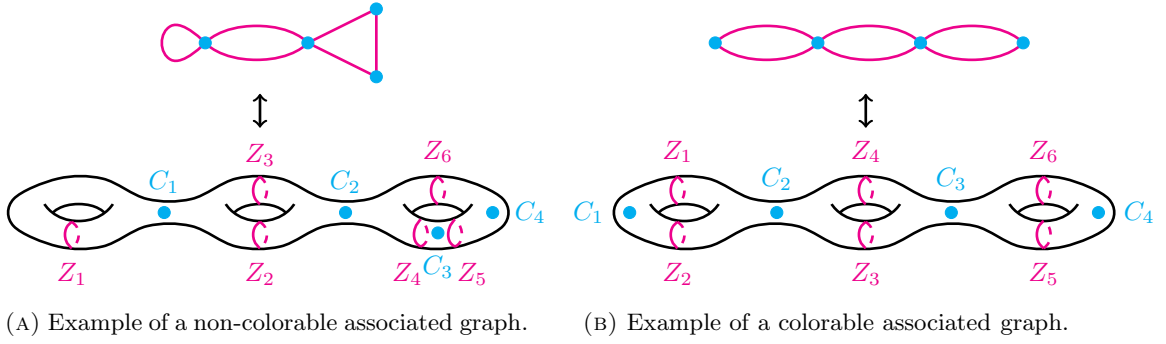
\begin{figure}
\begin{center}
\begin{subfigure}[t]{.47\textwidth}
    \begin{tikzpicture}[scale = 0.45]
\newcommand \unamica{0.4}
\newcommand \factorpetit{1.3}
\newcommand \factorpetitdos{0.5}
\newcommand \transun{0.7}
\newcommand \transdos{-4.5}
\newcommand \transtres{-9.8}
\newcommand \dreta{0.9}
\draw [black,line width=1pt] plot [smooth cycle, tension=0.6] coordinates {
(-9*\factorpetit,0 - \unamica)
(-9*\factorpetit,0 + \unamica)
(-8*\factorpetit,1)
(-7*\factorpetit,1)
(-6*\factorpetit,0 + \unamica)
(-5*\factorpetit,0 + \unamica)
(-4*\factorpetit,1)
(-3*\factorpetit,1)
(-2*\factorpetit,0 + \unamica)
(-1*\factorpetit,0 + \unamica)
(0*\factorpetit,1)
(1*\factorpetit,1)
(2*\factorpetit,0 + \unamica)
(2*\factorpetit,0 - \unamica)
(1*\factorpetit,-1)
(0*\factorpetit,-1)
(-1*\factorpetit,0 - \unamica)
(-2*\factorpetit,0- \unamica)
(-3*\factorpetit,-1)
(-4*\factorpetit,-1)
(-5*\factorpetit,0- \unamica)
(-6*\factorpetit,0- \unamica)
(-7*\factorpetit,-1)
(-8*\factorpetit,-1)};

\draw [black, line width=1pt] plot [smooth, tension=1.3] coordinates { (-2*\factorpetitdos +\transun,.5*\factorpetitdos)(0*\factorpetitdos + \transun,-.5*\factorpetitdos)(2*\factorpetitdos+ \transun,.5*\factorpetitdos) };
\draw [black, line width=1pt] plot [smooth, tension=1.3] coordinates { (-1.5*\factorpetitdos+ \transun,0*\factorpetitdos)(0*\factorpetitdos+ \transun,.5*\factorpetitdos)(1.5*\factorpetitdos+ \transun,0*\factorpetitdos) };

\draw [black, line width=1pt] plot [smooth, tension=1.3] coordinates { (-2*\factorpetitdos +\transdos,.5*\factorpetitdos)(0*\factorpetitdos + \transdos,-.5*\factorpetitdos)(2*\factorpetitdos+ \transdos,.5*\factorpetitdos) };
\draw [black, line width=1pt] plot [smooth, tension=1.3] coordinates { (-1.5*\factorpetitdos+ \transdos,0*\factorpetitdos)(0*\factorpetitdos+ \transdos,.5*\factorpetitdos)(1.5*\factorpetitdos+ \transdos,0*\factorpetitdos) };

\draw [black, line width=1pt] plot [smooth, tension=1.3] coordinates { (-2*\factorpetitdos +\transtres,.5*\factorpetitdos)(0*\factorpetitdos + \transtres,-.5*\factorpetitdos)(2*\factorpetitdos+ \transtres,.5*\factorpetitdos) };
\draw [black, line width=1pt] plot [smooth, tension=1.3] coordinates { (-1.5*\factorpetitdos+ \transtres,0*\factorpetitdos)(0*\factorpetitdos+ \transtres,.5*\factorpetitdos)(1.5*\factorpetitdos+ \transtres,0*\factorpetitdos) };

\draw [magenta, line width=1pt] plot [smooth, tension=1.3] coordinates { (-10,-.28)(-10.2,-.65)(-10,-1.05)};
\draw [magenta, dashed, line width=1pt] plot [smooth, tension=1.3] coordinates { (-10,-.28)(-9.8,-.65)(-10,-1.05)};

\draw [magenta, line width=1pt] plot [smooth, tension=1.3] coordinates { (-4.5,-.28)(-4.7,-.65)(-4.5,-1.05)};
\draw [magenta, dashed, line width=1pt] plot [smooth, tension=1.3] coordinates { (-4.5,-.28)(-4.3,-.65)(-4.5,-1.05)};

\draw [magenta, line width=1pt] plot [smooth, tension=1.3] coordinates { (-4.5,1.05)(-4.7,.65)(-4.5,.25)};
\draw [magenta, dashed, line width=1pt] plot [smooth, tension=1.3] coordinates { (-4.5,1.05)(-4.3,.65)(-4.5,.25)};

\draw [magenta, line width=1pt] plot [smooth, tension=1.3] coordinates { (.7,1.05)(.5,.65)(.7,.25)};
\draw [magenta, dashed, line width=1pt] plot [smooth, tension=1.3] coordinates { (.7,1.05)(.9,.65)(.7,.25)};

\draw [magenta, line width=1pt] plot [smooth, tension=1.3] coordinates { (.2,-1.0)(.0,-.65)(.2,-.2)};
\draw [magenta, dashed, line width=1pt] plot [smooth, tension=1.3] coordinates { (.2,-1.0)(.4,-.65)(.2,-.2)};

\draw [magenta, line width=1pt] plot [smooth, tension=1.3] coordinates { (1.2,-1.0)(1.0,-.65)(1.2,-.2)};
\draw [magenta, dashed, line width=1pt] plot [smooth, tension=1.3] coordinates { (1.2,-1.0)(1.4,-.65)(1.2,-.2)};

\draw[color=magenta] (-10,-1.5 - 0.3) node {$Z_1$};
\draw[color=magenta] (-4.5,-1.5- 0.3) node {$Z_2$};
\draw[color=magenta] (-4.5,1.5+ 0.2) node {$Z_3$};
\draw[color=magenta] (.1-0.3,-1.5- 0.3) node {$Z_4$};
\draw[color=magenta] (1.3+0.4,-1.5- 0.3) node {$Z_5$};
\draw[color=magenta] (.7,1.5+ 0.3) node {$Z_6$};

\draw [color= cyan, fill=cyan] (2.3,0) circle (5pt);
\draw[color=cyan] (3.2+.2,0) node {$C_4$};

\draw [color= cyan, fill=cyan] (-2,0) circle (5pt);
\draw[color=cyan] (-2,.7+.2) node {$C_2$};

\draw [color= cyan, fill=cyan] (.7,-.6) circle (5pt);
\draw[color=cyan] (.7,-1.5-0.1) node {$C_3$};

\draw [color= cyan, fill=cyan] (-7.3,0) circle (5pt);
\draw[color=cyan] (-7.3,.7+.2) node {$C_1$};

\draw [<->, black, line width=1pt] plot [smooth, tension=1.3] coordinates { (-4.5,2.5)(-4.5,3.5)};

\draw [magenta, line width=1pt] plot [smooth, tension=1.3] coordinates { (-4+\dreta,5)(-5.5+\dreta,5.5)(-7+\dreta,5)};
\draw [magenta, line width=1pt] plot [smooth, tension=1.3] coordinates { (-4+\dreta,5)(-5.5+\dreta,4.5)(-7+\dreta,5)};
\draw [magenta, line width=1pt] plot [smooth, tension=1.3] coordinates { (-7+\dreta,5)(-8+\dreta,4.5)(-8+\dreta,5.5)(-7+\dreta,5)};
\draw [magenta, line width=1pt] plot [smooth, tension=1.3] coordinates { (-4+\dreta,5)(-2+\dreta,6)};
\draw [magenta, line width=1pt] plot [smooth, tension=1.3] coordinates { (-4+\dreta,5)(-2+\dreta,4)};
\draw [magenta, line width=1pt] plot [smooth, tension=1.3] coordinates { (-2+\dreta,6)(-2+\dreta,4)};

\draw [color= cyan, fill=cyan] (-4 + \dreta,5) circle (5pt);
\draw [color= cyan, fill=cyan] (-7 + \dreta,5) circle (5pt);
\draw [color= cyan, fill=cyan] (-2 + \dreta,6) circle (5pt);
\draw [color= cyan, fill=cyan] (-2 + \dreta,4) circle (5pt);

\end{tikzpicture}
\caption{Example of a non-colorable associated graph.}
\label{fig:graph}
\end{subfigure}
\quad
\begin{subfigure}[t]{.47\textwidth}
    \begin{tikzpicture}[scale = 0.45]
\newcommand \unamica{0.4}
\newcommand \factorpetit{1.3}
\newcommand \factorpetitdos{0.5}
\newcommand \transun{0.7}
\newcommand \transdos{-4.5}
\newcommand \transtres{-9.8}
\newcommand \dreta{0.1}
\draw [black,line width=1pt] plot [smooth cycle, tension=0.6] coordinates {
(-9*\factorpetit,0 - \unamica)
(-9*\factorpetit,0 + \unamica)
(-8*\factorpetit,1)
(-7*\factorpetit,1)
(-6*\factorpetit,0 + \unamica)
(-5*\factorpetit,0 + \unamica)
(-4*\factorpetit,1)
(-3*\factorpetit,1)
(-2*\factorpetit,0 + \unamica)
(-1*\factorpetit,0 + \unamica)
(0*\factorpetit,1)
(1*\factorpetit,1)
(2*\factorpetit,0 + \unamica)
(2*\factorpetit,0 - \unamica)
(1*\factorpetit,-1)
(0*\factorpetit,-1)
(-1*\factorpetit,0 - \unamica)
(-2*\factorpetit,0- \unamica)
(-3*\factorpetit,-1)
(-4*\factorpetit,-1)
(-5*\factorpetit,0- \unamica)
(-6*\factorpetit,0- \unamica)
(-7*\factorpetit,-1)
(-8*\factorpetit,-1)};

\draw [black, line width=1pt] plot [smooth, tension=1.3] coordinates { (-2*\factorpetitdos +\transun,.5*\factorpetitdos)(0*\factorpetitdos + \transun,-.5*\factorpetitdos)(2*\factorpetitdos+ \transun,.5*\factorpetitdos) };
\draw [black, line width=1pt] plot [smooth, tension=1.3] coordinates { (-1.5*\factorpetitdos+ \transun,0*\factorpetitdos)(0*\factorpetitdos+ \transun,.5*\factorpetitdos)(1.5*\factorpetitdos+ \transun,0*\factorpetitdos) };

\draw [black, line width=1pt] plot [smooth, tension=1.3] coordinates { (-2*\factorpetitdos +\transdos,.5*\factorpetitdos)(0*\factorpetitdos + \transdos,-.5*\factorpetitdos)(2*\factorpetitdos+ \transdos,.5*\factorpetitdos) };
\draw [black, line width=1pt] plot [smooth, tension=1.3] coordinates { (-1.5*\factorpetitdos+ \transdos,0*\factorpetitdos)(0*\factorpetitdos+ \transdos,.5*\factorpetitdos)(1.5*\factorpetitdos+ \transdos,0*\factorpetitdos) };

\draw [black, line width=1pt] plot [smooth, tension=1.3] coordinates { (-2*\factorpetitdos +\transtres,.5*\factorpetitdos)(0*\factorpetitdos + \transtres,-.5*\factorpetitdos)(2*\factorpetitdos+ \transtres,.5*\factorpetitdos) };
\draw [black, line width=1pt] plot [smooth, tension=1.3] coordinates { (-1.5*\factorpetitdos+ \transtres,0*\factorpetitdos)(0*\factorpetitdos+ \transtres,.5*\factorpetitdos)(1.5*\factorpetitdos+ \transtres,0*\factorpetitdos) };

\draw [magenta, line width=1pt] plot [smooth, tension=1.3] coordinates { (-10,-.28)(-10.2,-.65)(-10,-1.05)};
\draw [magenta, dashed, line width=1pt] plot [smooth, tension=1.3] coordinates { (-10,-.28)(-9.8,-.65)(-10,-1.05)};

\draw [magenta, line width=1pt] plot [smooth, tension=1.3] coordinates { (-10,1.05)(-10.2,.65)(-10,.25)};
\draw [magenta, dashed, line width=1pt] plot [smooth, tension=1.3] coordinates { (-10,1.05)(-9.8,.65)(-10,.25)};

\draw [magenta, line width=1pt] plot [smooth, tension=1.3] coordinates { (-4.5,-.28)(-4.7,-.65)(-4.5,-1.05)};
\draw [magenta, dashed, line width=1pt] plot [smooth, tension=1.3] coordinates { (-4.5,-.28)(-4.3,-.65)(-4.5,-1.05)};

\draw [magenta, line width=1pt] plot [smooth, tension=1.3] coordinates { (-4.5,1.05)(-4.7,.65)(-4.5,.25)};
\draw [magenta, dashed, line width=1pt] plot [smooth, tension=1.3] coordinates { (-4.5,1.05)(-4.3,.65)(-4.5,.25)};

\draw [magenta, line width=1pt] plot [smooth, tension=1.3] coordinates { (.7,1.05)(.5,.65)(.7,.25)};
\draw [magenta, dashed, line width=1pt] plot [smooth, tension=1.3] coordinates { (.7,1.05)(.9,.65)(.7,.25)};

\draw [magenta, line width=1pt] plot [smooth, tension=1.3] coordinates { (.7,-1.05)(.5,-.65)(.7,-.25)};
\draw [magenta, dashed, line width=1pt] plot [smooth, tension=1.3] coordinates { (.7,-1.05)(.9,-.65)(.7,-.25)};

\draw[color=magenta] (-10,1.5+.3) node {$Z_1$};
\draw[color=magenta] (-10,-1.5-.3) node {$Z_2$};
\draw[color=magenta] (-4.5,-1.5-.3) node {$Z_3$};
\draw[color=magenta] (-4.5,1.5+.3) node {$Z_4$};
\draw[color=magenta] (.7,-1.5-.3) node {$Z_5$};
\draw[color=magenta] (.7,1.5+.3) node {$Z_6$};

\draw [color= cyan, fill=cyan] (2.3,0) circle (5pt);
\draw[color=cyan] (3.2+.3,0) node {$C_4$};

\draw [color= cyan, fill=cyan] (-2,0) circle (5pt);
\draw[color=cyan] (-2,.7+.3) node {$C_3$};

\draw [color= cyan, fill=cyan] (-7.1,0) circle (5pt);
\draw[color=cyan] (-7.1,.7+.3) node {$C_2$};

\draw [color= cyan, fill=cyan] (-11.4,0) circle (5pt);
\draw[color=cyan] (-12.3-.3,0) node {$C_1$};

\draw [<->, black, line width=1pt] plot [smooth, tension=1.3] coordinates { (-4.5,2.5)(-4.5,3.5)};

\draw [magenta, line width=1pt] plot [smooth, tension=1.3] coordinates { (-9+\dreta,5)(-7.5+\dreta,5.5)(-6+\dreta,5)};
\draw [magenta, line width=1pt] plot [smooth, tension=1.3] coordinates { (-9+\dreta,5)(-7.5+\dreta,4.5)(-6+\dreta,5)};
\draw [magenta, line width=1pt] plot [smooth, tension=1.3] coordinates { (-6+\dreta,5)(-4.5+\dreta,5.5)(-3+\dreta,5)};
\draw [magenta, line width=1pt] plot [smooth, tension=1.3] coordinates { (-6+\dreta,5)(-4.5+\dreta,4.5)(-3+\dreta,5)};
\draw [magenta, line width=1pt] plot [smooth, tension=1.3] coordinates { (-3+\dreta,5)(-1.5+\dreta,5.5)(0+\dreta,5)};
\draw [magenta, line width=1pt] plot [smooth, tension=1.3] coordinates { (-3+\dreta,5)(-1.5+\dreta,4.5)(0+\dreta,5)};

\draw [color= cyan, fill=cyan] (-9 + \dreta,5) circle (5pt);
\draw [color= cyan, fill=cyan] (-6 + \dreta,5) circle (5pt);
\draw [color= cyan, fill=cyan] (-3 + \dreta,5) circle (5pt);
\draw [color= cyan, fill=cyan] (0 + \dreta,5) circle (5pt);

\end{tikzpicture}
\caption{Example of a colorable associated graph.}
\label{fig:graph2}
\end{subfigure}
\end{center}
\caption{Examples of associated graphs.}
\label{fig:graphs}
\end{figure}

\begin{definition}
A \textbf{$2$-coloring of a graph} is a labeling (with only two labels) of the vertices of the graph such that no two adjacent vertices share the same label.
\end{definition}

\begin{definition}
Since not every graph admits a $2$-coloring, a  graph is called \textbf{$2$-colorable} if it admits a $2$-coloring.
\end{definition}

\subsection{$b^{2k}$-symplectic orientable surfaces}

We start by proving that only orientable surfaces admit $b^{2k}$-symplectic structures as expected, since for $m=2k$, $\omega$ is a positive section of $\Lambda^{2}(^b T^*S)$.
\begin{theorem}
If a compact surface admits a $b^{2k}$-symplectic structure then it is orientable.
\end{theorem}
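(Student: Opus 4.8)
The plan is to show directly that a $b^{2k}$-symplectic form $\omega$ on $S$ produces a genuine orientation, by exhibiting the honest (De~Rham) area form hidden inside it. The whole point is that the even exponent $m=2k$ forbids the sign change across $Z$ that occurs in the odd case — exactly the phenomenon that makes the $b^{2k+1}$-examples on $\mathbb{RP}^2$ and on the Klein bottle in Section~\ref{toyexamples} possible.

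First I would note that on $S\setminus Z$ the $b^m$-cotangent bundle coincides canonically with the ordinary cotangent bundle, so $\omega_0:=\omega|_{S\setminus Z}$ is an ordinary nowhere-vanishing $2$-form. On a surface a nonvanishing top form determines an orientation $\mathcal{O}$ of $S\setminus Z$ (call a frame $(u,v)$ positive when $\omega_0(u,v)>0$). Since $Z$ is a union of disjoint curves, $S\setminus Z$ is open and dense, so it is enough to prove that $\mathcal{O}$ extends continuously across $Z$; that extension is precisely a global orientation of $S$.

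Next I would pin down $\omega$ near $Z$ using the $b^m$-Darboux theorem (Theorem~\ref{theorem:Darbouxbn} with $n=1$): around each $p\in Z$ there is a chart $(U;x,y)$ with $Z\cap U=\{x=0\}$ and $\omega=\frac{1}{x^{m}}\,dx\wedge dy$. On $U\setminus Z$ this reads $\omega_0=\frac{1}{x^{2k}}\,dx\wedge dy$, and the coefficient $\frac{1}{x^{2k}}$ is strictly positive for every $x\neq 0$. Hence on $U\setminus Z$ the orientation $\mathcal{O}$ agrees with the one induced by the smooth nowhere-vanishing form $dx\wedge dy$, which is defined on the whole chart $U$, including $Z\cap U$. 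So $\mathcal{O}$ extends over $Z\cap U$ as the coordinate orientation $[dx\wedge dy]$. This is the step where the parity is decisive: for $m$ odd the coefficient $1/x^{m}$ flips sign across $Z$ and no such continuous extension exists.

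The point that needs care — and the only real obstacle — is the global consistency of these local extensions. On the overlap of two such Darboux charts the two extended (smooth) orientations agree on the dense open set obtained by deleting $Z$, where both coincide with $\mathcal{O}$. Since each is a locally constant section of the orientation double cover and every connected component of the overlap meets $S\setminus Z$ (because $Z$ has empty interior), the $\pm1$-valued discrepancy between them is locally constant and vanishes on a dense subset, hence vanishes on the whole overlap, $Z$ included. Patching the local extensions therefore yields a well-defined global orientation, so $S$ is orientable. I would close by remarking that the argument is entirely local around $Z$ and makes transparent that the parity of $m$ is the sole obstruction, in agreement with the non-orientable $b^{2k+1}$-examples above.
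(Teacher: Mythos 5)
Your proof is correct, but it takes a genuinely different route from the paper's. You work directly on $S$: the form $\omega$ restricts to an honest nowhere-vanishing $2$-form on $S\setminus Z$, the $b^m$-Darboux theorem (Theorem \ref{theorem:Darbouxbn}) shows that near $Z$ the induced orientation agrees on \emph{both} sides of $Z$ with the coordinate orientation $[dx\wedge dy]$ --- this is exactly where $m=2k$ enters, through the positivity of $1/x^{2k}$ --- and the local extensions are then patched by the standard connectedness argument: two orientations of a connected open set that agree on a dense subset agree everywhere, and every component of a chart overlap meets $S\setminus Z$ since $Z$ has empty interior. The paper instead passes to the orientable double cover $(\tilde{S},\tilde{Z})$ with its deck transformation $\rho$, builds a symmetric collar of Darboux neighborhoods upstairs, descends the compatible orientations to $(S,Z)$, and finally perturbs $\omega$ near $Z$ into a genuine symplectic form so as to glue orientations by Radko's techniques. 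Your argument is more elementary and self-contained: it avoids the double cover, the symmetrization of the covering, and the perturbation-and-gluing step, it never uses compactness of $S$ (only that $Z$ is a closed hypersurface), and it isolates the parity of $m$ as the sole obstruction, in transparent agreement with the odd-$m$ examples on $\mathbb{RP}^2$ and the Klein bottle in Section \ref{toyexamples}. What the paper's formulation buys is coherence with the equivariant, covering-space viewpoint used throughout the article, and a template that is reused almost verbatim for $b^{2k}$-Nambu structures in Section \ref{sec:nambu}; note, though, that your argument would adapt to that setting just as well, replacing the Darboux model by the local normal form of Definition \ref{defnamb1}.
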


\begin{proof}

The proof consists in building a collar of $b^{2k}$-Darboux neighborhoods with compatible orientations in a neighborhood of each connected component of $Z$.

Pick $(\tilde{S}, \tilde{Z})$ an orientable double cover of the $b^{2k}$-surface $(S, Z)$, with $\rho: \mathbb{Z}/2\mathbb{Z}\times \tilde{S} \rightarrow \tilde{S}$ the deck transformation.
For each point $p \in \tilde{Z}$ we can find a $b^{2k}$-Darboux neighborhood $U_p$ (by shrinking the neighborhood if necessary) which does not contain other points identified by $\rho$ ($\pi(U_p) \cong U_p$). Thus $V_p := \pi(U_p)$, and $\omega = \frac{1}{x^{2k}} dx \wedge dy$. Being $\omega$ of this type, it defines an orientation on $V_p\setminus \pi(\tilde{Z})$.
Since $\tilde{Z}$ is compact we can take a finite covering for such neighborhoods to define a collar $V$ of compatible orientations.
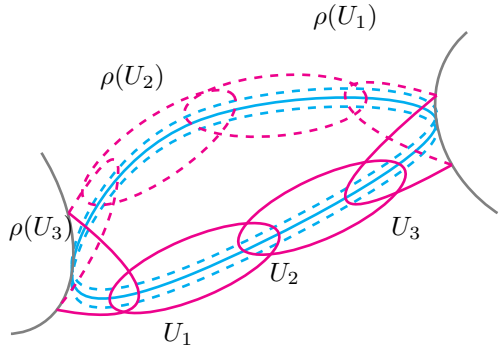
\begin{figure}[h!]

\begin{tikzpicture}[scale = 0.4]

\draw [cyan, line width=1pt] plot [smooth cycle, tension=0.8] coordinates { (-1,-2) (-5,-2) (-1,3) (7,3) };


\draw [cyan,dashed, line width=1pt] plot [smooth cycle, tension=0.8] coordinates { (-1,-2+0.3) (-5,-2+0.3) (-1,3+0.3) (7,3+0.3) }; 
\draw [cyan,dashed, line width=1pt] plot [smooth cycle, tension=0.8] coordinates { (-1,-2-0.3) (-5,-2-0.3) (-1,3-0.3) (7,3-0.3) }; 


\draw [gray, line width=1pt] plot [smooth, tension=1.2] coordinates { (-6,2)(-5,-2)(-7,-4) };
\draw [gray, line width=1pt] plot [smooth, tension=1.2] coordinates { (8,6)(7,3)(9,0) };


\draw [rotate = 23,magenta, line width=1pt] (-1.6,-1.3) ellipse (3cm and 1cm);
\draw [rotate = 25,magenta, line width=1pt] (3,-1.2) ellipse (3cm and 1cm);
\draw [magenta, line width=1pt] plot [smooth, tension=1.3] coordinates { (7,3.9)(4,.5)(7.5,1.6) };
\draw [magenta, dashed, line width=1pt] plot [smooth, tension=1.8] coordinates { (7,3.9)(4,3.8)(7.5,1.6) };
\draw [rotate = 5,dashed, magenta, line width=1pt] (2,3.4) ellipse (3cm and 1cm);
\draw [rotate = 35,dashed, magenta, line width=1pt] (-0.5,3) ellipse (3cm and 1cm);
\draw [magenta, line width=1pt] plot [smooth, tension=1.1] coordinates { (-5.1,0)(-2.8,-2.8)(-5.5,-3.2) };
\draw [magenta, dashed, line width=1pt] plot [smooth, tension=2.5] coordinates { (-5.1,0)(-3.5,1)(-5.5,-3.2) };


\node (b) at (-1.5,-4) {$U_1$};
\node (b) at (2,-2) {$U_2$};
\node (b) at (6,-.5) {$U_3$};
\node (b) at (4,6.5) {$\rho(U_1)$};
\node (b) at (-3,4.5) {$\rho(U_2)$};
\node (b) at (-6,-.5) {$\rho(U_3)$};

\end{tikzpicture}

\caption{A collar of compatible neighborhoods.}
\label{fig:bombs}
\end{figure}
Furthermore we can assume this covering to be symmetric (for each $U_p$  the image $\rho(U_p)$ is also one of the subsets of the covering).
This covering will be compatible with the orientation because $\rho$ preserves $\omega$.
The compatible orientations and the symmetric coverings descend to $(S,Z)$, thus defining an orientation in $(S,Z)$ inducing an orientation on $V\setminus Z$. By perturbing $\omega$ in $V$ we obtain a symplectic structure $\tilde{\omega}$ on $V$  and thus an orientation in $V$. Using the standard techniques of Radko \cite{radko} these can be glued to define an orientation via the symplectic form $\tilde{\omega}$ on the whole $S$. In the case $Z$ has more than one connected component we may proceed in the same way by isolating collar neighborhoods of each component. Thus proving that  $S$ is oriented.

\end{proof}

\subsection{$b^{2k+1}$-symplectic orientable surfaces}

\begin{theorem}\label{prescribed} Given a pair $(S,Z)$ with $S$ orientable, there exists a $b^{m}$-symplectic structure with critical set $Z$ whenever:

 \begin{enumerate}
   \item $m = 2k$,
   \item $m = 2k+1$  if only if the associated graph is $2$-colorable.
 \end{enumerate}

\end{theorem}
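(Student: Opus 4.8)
The plan is to extract, from any $b^m$-symplectic form on $(S,Z)$, an orientation of $S\setminus Z$ and to track how this orientation behaves when one crosses a component of $Z$. The parity of $m$ dictates this behavior, and it is exactly this dichotomy that links the problem to the $2$-colorability of the associated graph.

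First I would use the $b^m$-Darboux Theorem \ref{theorem:Darbouxbn} to write $\omega$ near a component $\gamma\subseteq Z$ in the local form $\frac{dx}{x^m}\wedge dy$. Away from $Z$ the form $\omega$ is a genuine area form, so it induces an orientation on each connected component $U_i$ of $S\setminus Z$; comparing this with a fixed global orientation of the orientable surface $S$ assigns a label $\varepsilon(U_i)\in\{+,-\}$ to every vertex of the associated graph. In the local model the coefficient $1/x^m$ is positive for $x>0$ and has sign $(-1)^m$ for $x<0$. Hence when $m=2k$ the orientation induced by $\omega$ is unchanged upon crossing $\gamma$ (adjacent regions carry the \emph{same} label), while when $m=2k+1$ it is reversed (adjacent regions carry \emph{opposite} labels).

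For the existence (sufficiency) direction I would build $\omega$ by hand. Since $S$ is orientable, each component of $Z$ is two-sided and has an annular tubular neighborhood with coordinates $(x,\theta)$, $\theta\in S^1$; on it I place the model $\pm\frac{dx}{x^m}\wedge d\theta$, the sign chosen according to a prescribed labeling of the vertices. Over each region $U_i$ I then extend these collar forms to a global area form compatible with the label $\varepsilon(U_i)$: because the positive area forms on an oriented surface form a convex cone, a partition of unity interpolates between the collar models near $\del U_i$ and any chosen compatible area form in the interior, the convex combination remaining nondegenerate. This succeeds precisely when the labeling is globally consistent, i.e. the two collar contributions meeting along each component of $Z$ carry matching signs. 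For $m=2k$ the two sides must carry the same label, which is no constraint at all (take all labels equal), so a structure always exists. For $m=2k+1$ the two sides must carry opposite labels, which is exactly a proper $2$-coloring of the associated graph; such a labeling exists if and only if the graph is $2$-colorable.

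The converse (necessity) for $m=2k+1$ is then immediate from the second paragraph: any $b^{2k+1}$-symplectic form with critical set $Z$ yields, via $\varepsilon$, a labeling in which adjacent vertices receive opposite signs, that is, a proper $2$-coloring, so the graph must be $2$-colorable (in particular it can carry no loop, matching the fact that orientation cannot reverse within a single connected region). The main obstacle I expect is the gluing step: one must verify that the annular $b^m$-Darboux models patch with a global interior area form into a single smooth nondegenerate $b^m$-form while keeping careful track of the signs along each component of $Z$. The convexity of the cone of area forms is what makes the interpolation go through once the labels are globally consistent.
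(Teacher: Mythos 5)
Your proposal is correct and follows the same skeleton as the paper's proof: build $b^m$-models in collars around the components of $Z$, attach a sign to each component of $S\setminus Z$, observe that the parity of $m$ forces adjacent signs to agree ($m=2k$) or disagree ($m=2k+1$), and glue. Where you genuinely differ is in the two technical steps. For the collar models, the paper invokes Weinstein's Lagrangian neighborhood theorem (Lemma \ref{WeinsteinNormalFormTheorem}) applied to each $Z_i$ and then replaces $T^*Z_i$ by the $b^m$-cotangent bundle, while you use two-sidedness of a curve in an oriented surface to write the explicit annular model $\pm\frac{dx}{x^m}\wedge d\theta$; on a surface these yield the same local structure, and your route is more concrete. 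For the gluing, the paper appeals to Moser's path method to make the structures ``fit on the boundary,'' whereas you interpolate with a partition of unity inside each region $U_i$, using convexity of the cone of positive area forms together with the fact that top-degree forms are automatically closed, so only nondegeneracy must be preserved; this direct interpolation is more elementary, avoids the diffeomorphism adjustment implicit in Moser's method, and is arguably the cleanest way to make the paper's terse step (3) rigorous. Finally, you carry out the ``only if'' half of item (2) explicitly from the sign flip of $1/x^m$ across $Z$ in a Darboux chart (Theorem \ref{theorem:Darbouxbn}), including the observation that a loop in the graph is an immediate obstruction; the paper relegates this direction to the remark following its proof, where the coloring is read off from $\text{sign}(\int_{C_i}\omega)$ --- your orientation labels and the paper's Liouville-volume signs are the same bookkeeping.
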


For the proof we will need the following,

\begin{lemma}[\textbf{Weinstein normal form theorem}]\label{WeinsteinNormalFormTheorem} Let $L$ be a Lagrangian submanifold of a symplectic manifold $(M, \omega)$, then there exists a neighborhood of $L$, $U_L$ which is symplectomorphic to a neighborhood of the zero section of the cotangent bundle of $T^{*}L$ endowed with the symplectic form $-d\lambda$ with $\lambda$ the Liouville one form.
\end{lemma}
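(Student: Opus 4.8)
The plan is to reduce the statement to an application of the relative Moser trick, after fixing a model for the normal geometry of $L$. First I would invoke the tubular neighborhood theorem to obtain a diffeomorphism between a neighborhood $U_L$ of $L$ in $M$ and a neighborhood of the zero section $0_L$ in the normal bundle $\nu(L) = TM|_L/TL$, restricting to the identity on $L$. The crucial algebraic input is that $L$ is Lagrangian: the form $\omega$ then induces a nondegenerate pairing between $TL$ and $\nu(L)$ via $v \mapsto \iota_v\omega|_{TL}$, and this furnishes a vector bundle isomorphism $\nu(L) \cong T^*L$. Composing, I obtain a diffeomorphism $\psi$ from a neighborhood of $0_L$ inside $T^*L$ onto $U_L$ with $\psi|_{0_L} = \mathrm{id}_L$.

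With $\psi$ in hand I would transport $\omega$ to the model and compare it against the canonical form. Set $\omega_1 := \psi^*\omega$ and $\omega_0 := -d\lambda$ on a neighborhood of $0_L$ in $T^*L$. Both are closed and nondegenerate, and both have $0_L$ as a Lagrangian submanifold. The key verification, and the step I expect to be the main obstacle, is that $\omega_0$ and $\omega_1$ agree \emph{along} $0_L$ as two-forms on $T(T^*L)$ restricted to points of the zero section; this is exactly what the identification $\nu(L)\cong T^*L$ was engineered to guarantee, but it requires carefully unwinding the definition of the canonical symplectic structure and of the pairing induced by $\omega$ at points of $L$.

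Granting this first-order agreement, $\omega_1 - \omega_0$ vanishes along $0_L$, so the relative Poincar\'e lemma yields a one-form $\mu$ with $d\mu = \omega_1 - \omega_0$ and $\mu$ vanishing along $0_L$. Finally I would run Moser's path argument relative to $L$: the linear family $\omega_t = \omega_0 + t(\omega_1-\omega_0)$ is nondegenerate on a possibly smaller neighborhood of $0_L$, so I can solve $\iota_{X_t}\omega_t = -\mu$ for a time-dependent vector field $X_t$ that vanishes along $0_L$. Its flow $\phi_t$ is defined near $0_L$, fixes $0_L$ pointwise, and satisfies $\phi_t^*\omega_t = \omega_0$; evaluating at $t=1$ gives $(\psi\circ\phi_1)^*\omega = -d\lambda$, which is precisely the asserted symplectomorphism between a neighborhood of $L$ and a neighborhood of the zero section of $T^*L$.
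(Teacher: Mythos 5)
You should first note that the paper itself offers no proof of this lemma: it is stated as Weinstein's classical Lagrangian neighborhood theorem and used as a black box in the proof of Theorem \ref{prescribed}. So the only meaningful comparison is with the standard proof of that classical theorem, which is indeed the argument you are sketching (tubular neighborhood, identification of the normal bundle with $T^{*}L$, relative Moser). Your reduction to the model, your use of the relative Poincar\'e lemma, and the final Moser flow are all structured correctly.

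The genuine gap sits exactly at the step you flag as the ``main obstacle,'' and it is not merely a matter of unwinding definitions: the first-order agreement of $\omega_1=\psi^{*}\omega$ with $\omega_0=-d\lambda$ along the zero section is \emph{false} for an arbitrary tubular neighborhood, and the canonical identification $\nu(L)\cong T^{*}L$ was not ``engineered to guarantee'' it. At a point $q\in L$ write $T_{(q,0)}(T^{*}L)=T_qL\oplus T_q^{*}L$. For $\omega_0$ both summands are Lagrangian and the only nonzero contribution is the tautological pairing between them. Now pull back $\omega$ by $\psi$: the $T_qL$--$T_qL$ terms vanish because $L$ is Lagrangian, and the mixed terms reproduce the tautological pairing --- that much your identification does buy, since the fiber direction $\xi$ is sent to a vector $W\xi\in T_qM$ with $\iota_{W\xi}\,\omega|_{T_qL}=\xi$. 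But the fiber--fiber terms are $\omega(W\xi,W\xi')$, where $W\colon T_q^{*}L\to T_qM$ is the splitting of $T_qM\twoheadrightarrow\nu_q(L)$ implicit in your chosen tubular neighborhood, and these vanish if and only if the image of $W$ is a \emph{Lagrangian} complement of $T_qL$ in $T_qM$. A generic tubular neighborhood produces a non-Lagrangian complement; the two forms then already disagree at points of $0_L$, the relative Poincar\'e lemma has nothing to apply to, and the Moser step cannot start. The standard repair is to fix the complement before invoking the tubular neighborhood theorem: choose a compatible almost complex structure $J$ on $(M,\omega)$ (or a suitable Riemannian metric) and take $J(TL)$, which is automatically a Lagrangian complement, as the realization of $\nu(L)$, building the tubular neighborhood (for instance via the exponential map of the associated metric) so that its differential along $L$ restricts to this splitting; alternatively one can run Weinstein's original fibrewise linear-algebra and extension argument. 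With that insertion the rest of your proof goes through as written.
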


\begin{proof} (of Theorem \ref{prescribed})

Let $C_1, \ldots, C_r$ be the connected components of $S\setminus Z$, let $Z_1, \ldots, Z_s$ the connected components of $Z$ and let $\mathcal{U}(Z_1), \ldots, \mathcal{U}(Z_s)$ tubular neighborhoods of the connected components.

We can prove this using a 3-step proof.

\begin{enumerate}

\item \textbf{Using Weinstein normal form theorem.} By virtue of Lemma \ref{WeinsteinNormalFormTheorem}, each tubular neighborhood $\mathcal{U}(Z_i)$ can be identified with a zero section of the cotangent bundle of $Z_i$. Now replace, the cotangent bundle of $Z_i$ by the $b^{m}$-cotangent bundle of $Z_i$. The neighborhood of the zero section of the $b^m$-cotangent bundle has a $b^m$-symplectic structure $\omega_{\mathcal{U}(Z_i)}$.

\item \textbf{Constructing compatible orientation using the graph.} We assign a couple of signs to each tubular neighborhood using the sign of the Liouville volume of $\omega_{\mathcal{U}(Z_i)}$. Note that the sign does not change for $k$ even, but it changes for $k$ odd. Observe that we can apply Moser's trick to glue two rings that share some $C_j$ if and only if the sign of the two rings match on this component. In terms of coloring graphs, this condition  translates into one vertex only having one color. In other words the condition of adjacent signs matching determines the color of a vertex.

    Now, let us consider separately the odd an even cases:

\begin{enumerate}
    \item For $b^{2k}$ the color of adjacent vertices must coincide. And hence we have no additional constraint on the topology of the graph.
    \item In the $b^{2k+1}$ case the sign of two adjacent vertices must be different. Then, we have to impose the associated graph to be $2$-colorable.
\end{enumerate}
%
%
%
%

\item \textbf{Gluing.} Now we may glue back this neighborhood to $S\setminus \mathcal{U}(Z)$ in such a way that the symplectic structures fit on the boundary, by means of the Moser's path method.

%
%
%
%
%
%
%
%
\end{enumerate}
\end{proof}

Given a $b^{2k+1}$-symplectic structure $\omega$ on a $b$-surface $(S,Z)$ one can obtain a $2$-coloring of the associated graph by assigning to each connected component $C_i$ of $S\setminus Z$ the `color' $\text{sign}(\int_{C_i} \omega)$.

Another way to construct $b^{2k}$-structures on a surface is to use decomposition theorem as connected sum of $b^{2k}$-spheres (\ref{ex:sphere}) and $b^{2k}$-torus (\ref{ex:torus}). The drawback of this construction is that it is harder to adapt  having fixed a prescribed $Z$.

\subsection{$b^{2k+1}$-symplectic non-orientable surfaces}

\begin{remark}
Since a $b$-map on a $b$-manifold $(M,Z)$ sends $Z$ to $Z$, it has to send connected components of $M\setminus Z$ to connected components of $M\setminus Z$.
\end{remark}

\begin{definition}
We say that a $b$-map $\rho$ \textbf{inverts colors} of the associated graph if $\text{sign}(\int_{C_i} \omega) = -\text{sign}(\int_{\rho(C_i)} \omega)$.
\end{definition}

Observe also that a $b$-map that inverts colors is also orientation reversing. Because the orientations defined in $S$ by $\omega$ are different in connected components with different colors. One may see this by looking at a neighborhood of $Z$ in coordinates.

\begin{theorem}
Any pair $(S,Z)$ with  $S$ a non-orientable surface and  $Z$ a hypersurface admits a $b^{2k+1}$-symplectic if and only if the following two conditions hold:

\begin{enumerate}
	\item the graph of the covering $(\tilde{S},\tilde{Z})$ is 2-colorable and
	\item the deck transformation inverts colors of the graph obtained in the covering.
\end{enumerate}
\end{theorem}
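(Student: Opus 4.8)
The plan is to read both conditions off the orientation double cover $p\colon(\tilde S,\tilde Z)\to(S,Z)$, using two facts: the nontrivial deck transformation $\rho$ of a non-orientable surface is orientation-reversing, and a $b^{2k+1}$-form descends along $p$ precisely when it is $\rho$-invariant. The necessity direction then reduces to the orientable theory. If $\omega$ is a $b^{2k+1}$-symplectic form on $S$, then $\tilde\omega:=p^*\omega$ is a $b^{2k+1}$-symplectic form on the \emph{orientable} surface $\tilde S$ with critical set $\tilde Z$, so by the construction of a $2$-coloring from a $b^{2k+1}$-structure given after Theorem \ref{prescribed}, the associated graph of $(\tilde S,\tilde Z)$ is $2$-colorable, which is condition (1). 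Moreover $p\circ\rho=p$ forces $\rho^*\tilde\omega=\tilde\omega$, so $\tilde\omega$ is $\rho$-invariant.

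For condition (2), fix an orientation on $\tilde S$ and let $c(C_i):=\mathrm{sign}\int_{C_i}\tilde\omega$ (the sign of the regularized Liouville volume, equivalently of the orientation that $\tilde\omega$ induces on $C_i$) be the coloring of the components $C_i$ of $\tilde S\setminus\tilde Z$. Since $\rho$ reverses orientation and $\rho^*\tilde\omega=\tilde\omega$, the change of variables along the diffeomorphism $\rho\colon C_i\to\rho(C_i)$ yields
\begin{equation}
\int_{\rho(C_i)}\tilde\omega=-\int_{C_i}\rho^*\tilde\omega=-\int_{C_i}\tilde\omega,
\end{equation}
whence $c(\rho(C_i))=-c(C_i)$, i.e. $\rho$ inverts colors. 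Because $S$ is connected the associated graph is connected, its $2$-coloring is unique up to a global swap of the two labels, and the property ``$\rho$ inverts colors'' is invariant under that swap; so condition (2) is well defined.

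For sufficiency, assume (1) and (2). By (1) and Theorem \ref{prescribed} there is a $b^{2k+1}$-symplectic form $\tilde\omega$ on $\tilde S$ with critical set $\tilde Z$, and choosing the signs in that construction to realize the (essentially unique) $2$-coloring, condition (2) guarantees $c(\rho(C_i))=-c(C_i)$. Running the displayed identity in reverse shows that $\tilde\omega$ and $\rho^*\tilde\omega$ have the \emph{same} sign on every component: $\mathrm{sign}\int_{C_i}\rho^*\tilde\omega=-\mathrm{sign}\int_{\rho(C_i)}\tilde\omega=-c(\rho(C_i))=c(C_i)$. Both forms are $b^{2k+1}$-symplectic with critical set $\tilde Z$ (as $\rho$ preserves $\tilde Z$), so by the linear-path non-degeneracy argument used in the proof of Theorem \ref{emt} their average $\tilde\omega^{G}:=\tfrac12(\tilde\omega+\rho^*\tilde\omega)$ is again non-degenerate off $\tilde Z$ and keeps the correct $b^{2k+1}$-singularity along $\tilde Z$. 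By construction $\tilde\omega^{G}$ is $\rho$-invariant, so the universal property of the quotient lets it descend to a $b^{2k+1}$-symplectic form on $S=\tilde S/\rho$ with critical set $Z$, exactly as in Corollary \ref{non-orientable}.

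The crux of the argument, and the step I expect to need the most care, is the sign bookkeeping: the whole theorem hinges on the identity $c(\rho(C_i))=-c(C_i)$, and one must track precisely how the orientation-reversing nature of $\rho$ (the sign $-1$ in the change of variables) combines with the color-inverting hypothesis (2) so that the averaged form $\tilde\omega^{G}$ does not collapse to zero on any component. The only subsidiary point to nail down is that the coloring is genuinely well defined on the connected associated graph, i.e. unique up to global color swap, so that the statement of condition (2) is unambiguous.
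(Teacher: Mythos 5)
Your proposal is correct, and its sufficiency argument is essentially the paper's own: apply Theorem \ref{prescribed} to the orientable cover, then use the sign identity
\begin{equation}
\operatorname{sign}\left(\int_{C_i}\rho^*\omega\right)=-\operatorname{sign}\left(\int_{\rho(C_i)}\omega\right)=+\operatorname{sign}\left(\int_{C_i}\omega\right)
\end{equation}
(first equality from $\rho$ reversing orientation, second from color inversion) to conclude that the symmetrized form $\omega+\rho^*\omega$ (your $\tfrac12(\omega+\rho^*\omega)$ is the same thing) is a non-degenerate, $\rho$-invariant $b^{2k+1}$-form that descends to the quotient. Where you go beyond the paper is in the \emph{necessity} direction: the paper's proof only establishes the ``if'' half, leaving ``only if'' to be suggested by a subsequent example, whereas you prove it directly by pulling $\omega$ back to $\tilde S$, noting $p\circ\rho=p$ forces $\rho^*p^*\omega=p^*\omega$, and running the same sign identity in reverse to deduce both $2$-colorability and color inversion. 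You also address a point the paper leaves implicit, namely that condition (2) is well posed because the $2$-coloring of the connected associated graph is unique up to a global swap, under which ``inverts colors'' is invariant. Both additions genuinely strengthen the argument; the core mechanism (Theorem \ref{prescribed} plus averaging, as in Corollary \ref{non-orientable}) is identical to the paper's.
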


\begin{proof}
	
Apply Theorem \ref{prescribed} to endow the covering $(\tilde{S},\tilde{Z})$ with a $b^{2k+1}$-symplectic structure, if the form obtained is invariant by the deck transformations, then we may apply the equivariant $b^m$-Moser theorem \ref{emt} considering the deck transformations as group action, thus obtaining a $b^{2k+1}$-symplectic structure on $(\tilde{S},\tilde{Z})$ and then we are done.
	
Now, let us assume that the $b^{2k+1}$-form $\omega$ obtained via theorem \ref{prescribed} is not invariant by deck transformations. We will note the deck transformation induced by $-\text{Id}$ as $\rho$.
Observe that
\begin{equation}
\text{sign}\left(\int_{C_i} \rho^* \omega\right) = - \text{sign}\left(\int_{\rho(C_i)} \omega\right) = + \text{sign}\left(\int_{C_i} \omega\right).
\end{equation}
The first equality is due to $\rho$ changing orientations and the second one is due to $\rho$ inverting colors. Then the pullback of $\omega$ has the same sign as $\omega$, and hence $\omega + \rho^*(\omega)$ is a non-degenerate $b^{2k+1}$-form that is invariant under the action of $\rho$, and we can take it down to the quotient. Hence a $b^{2k+1}$-symplectic structure is obtained on $(S,Z)$.

\end{proof}

\begin{example}
Let us illustrate what is happening in the previous proof with an example. Take the sphere having the equator as critical set and endowed with the $b$-symplectic form $\omega = \frac{1}{h} dh\wedge d\theta$. Let us call the north hemisphere $C_1$ and the south hemisphere $C_2$, and let $\rho$ be the antipodal map. Look at the coloring of the graph (a path graph of length two):
\begin{equation}
\text{sign}(C_1) = \text{sign}\left(\int_{C_1}\omega\right) = \text{sign}\left(\lim_{\varepsilon \rightarrow 0} \int_{-\pi}^{\pi} \int_\varepsilon^{1} \frac{1}{h}dh\wedge d\theta\right) = \text{sign}(\lim_{\varepsilon \rightarrow 0}-2\pi \log|\varepsilon|)
\end{equation}
which is positive. And
\begin{equation}
\text{sign}(C_2) = \text{sign}\left(\int_{C_2}\omega\right) = \text{sign}\left(\lim_{\varepsilon \rightarrow 0} \int_{-\pi}^{\pi} \int_{-1}^{-\varepsilon} \frac{1}{h}dh\wedge d\theta\right) = \text{sign}(\lim_{\varepsilon \rightarrow 0}2\pi \log|\varepsilon|)
\end{equation}
which is negative. Then,
\begin{equation}
\int_{C_1} \rho^* \omega = \lim_{\varepsilon \rightarrow 0} \int_{-\pi}^{\pi} \int_{\epsilon}^{1} \rho^*\left(\frac{1}{h}dh\wedge d\theta\right) = \lim_{\varepsilon \rightarrow 0} \int_{-\pi}^{\pi} \int_{\epsilon}^{1} \frac{1}{-h}d(-h)\wedge d\theta = \int_{C_1} \omega.
\end{equation}

In this case $\omega$ was already invariant, but one can observe that if $\rho$ inverts colors then the signs of the form and the pullback are the same.
\end{example}

\begin{example}
One may ask why the condition of inverting colors is necessary. Next we provide an example where $b^{2k+1}$-structures can be exhibited on the double cover but cannot be projected to induce a $b^{2k+1}$-structure on the non-orientable surface.

Consider the Example \ref{example:projective} in Section \ref{toyexamples}. If one translates the critical set in the $h$ direction in the projective space, the double cover is still the sphere, but instead of $Z$ being the equator, $Z$ consists of different meridians $\{h = h_0\}$ and $\{h = -h_0\}$.

Observe that the associated graph of this double cover is a path graph of length $3$, that can be easily $2$-colored. Take a generic $b^{2k+1}$-form $\omega = f(h, \theta) dh\wedge d\theta$, and look at the poles $N, S$. $\text{sign}(f(N)) = \text{sign}(f(S))$ because of the $2$-coloring of the graph. But $\rho^*(\omega)|_N = f(\rho(N))d(-h)\wedge d\theta = -f(S)dh\wedge d\theta$. Then $\text{sign}(\rho^*(\omega)) \neq \text{sign}(\rho^*(\omega))$, and hence $\omega$ can not be invariant for $\rho$.
\end{example}

\section{Desingularization of surfaces}\label{sec:desingularization}

Let $(M,\omega)$ be a $b^{2k}$-symplectic manifold and let $Z$ be the critical set and $U_\epsilon$ an $\epsilon$-neighborhood of $Z$.

The aim of this section is to use the desingularization formulas \cite{evajonathanvictor} in the case of surfaces. We end seeing that if $[\omega_1] = [\omega_2]$ in the $b^{2k}$-cohomology, then the desingularization of the two forms also is in the same class $[\omega_{1\epsilon}] = [\omega_{2\epsilon}]$. But the converse is not true: it is possible to find  different classes of $b^{2k}$-forms that go the same class when desingularized.

Let us  briefly recall how the desingularization is defined and  the main result in \cite{evajonathanvictor}.
Recall that we can express the $b^{2k}$-form as:
$$\omega = \frac{dx}{x^{2k}}\wedge \left(\sum_{i=0}^{2k-1}x^i\alpha_i\right) + \beta.$$

\begin{definition}The \textbf{$f_\epsilon$-desingularization} $\omega_\epsilon$ form of $\omega$ is:
	$$\omega_\epsilon = df_\epsilon \wedge \left(\sum_{i=0}^{2k-1}x^i\alpha_i\right) + \beta.
	$$
	Where $f_\epsilon(x)$ is defined as $\epsilon^{-(2k -1)}f(x/\epsilon)$.
	And $f \in \mathcal{C}^\infty(\mathbb{R})$ is an odd smooth function satisfying $f'(x) > 0$ for all $x \in \left[-1,1\right] $ and satisfying outside that
\begin{equation}
f(x) = \begin{cases}
\frac{-1}{(2k-1)x^{2k-1}}-2& \text{for} \quad x < -1,\\
\frac{-1}{(2k-1)x^{2k-1}}+2& \text{for} \quad x > 1.\\
\end{cases}
\end{equation}
\begin{figure}[h!]
	\centering
	\includegraphics[scale=.7]{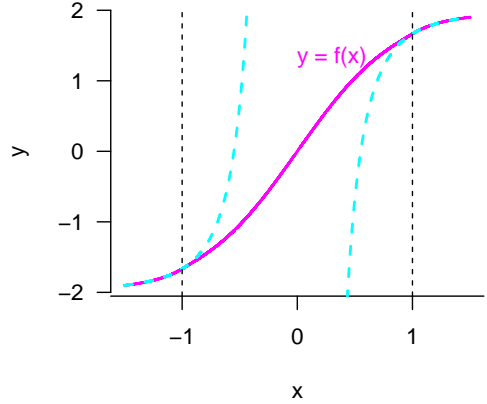}
	\caption{Smooth and odd extension of $f$ inside the interval $[0,1]$ such that $f' \neq 0$.}
	\label{fig:deblogging}
\end{figure}
\end{definition}

\begin{theorem}[\textbf{Desingularization}, \cite{evajonathanvictor}]
	The $f_\epsilon$-desingularization $\omega_{1\epsilon}\epsilon$ is symplectic. The family $\omega_\epsilon$ coincides with the $b^{2k}$-form $\omega$ outside an $\epsilon$-neighbourhood of $Z$. The family of bivector fields $\omega_\epsilon^{-1}$ converges to the structure $\omega^{-1}$ in the $C^{2k-1}$-topology as $\epsilon \rightarrow 0$.
\end{theorem}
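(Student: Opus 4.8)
The plan is to verify the three assertions in turn: that $\omega_\epsilon$ agrees with $\omega$ off the $\epsilon$-tube, that $\omega_\epsilon$ is a genuine symplectic form (closed and non-degenerate) for $\epsilon$ small, and that the dual bivectors converge in the $C^{2k-1}$-topology.

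First I would dispose of the coincidence and closedness statements, which are essentially formal. For $|x|>\epsilon$ one has $|x/\epsilon|>1$, so $f_\epsilon(x)=\epsilon^{-(2k-1)}f(x/\epsilon)=\frac{-1}{(2k-1)x^{2k-1}}\pm 2\epsilon^{-(2k-1)}$; differentiating, the additive constant drops and $df_\epsilon=\frac{dx}{x^{2k}}$, whence $\omega_\epsilon=\omega$ outside the $\epsilon$-neighbourhood of $Z$ (and there it is honestly symplectic, as a $b^{2k}$-symplectic form is symplectic away from $Z$). For closedness I use that, since $\omega$ is a closed $b^{2k}$-form, each $\alpha_i$ and $\beta$ are closed De Rham forms. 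Writing $\gamma:=\sum_{i=0}^{2k-1}x^i\alpha_i$ and $df_\epsilon=f_\epsilon'(x)\,dx$, I compute $d\omega_\epsilon=-df_\epsilon\wedge d\gamma=-\sum_i f_\epsilon'(x)\,dx\wedge(i x^{i-1}\,dx\wedge\alpha_i+x^i\,d\alpha_i)$; the first summand dies because $dx\wedge dx=0$ and the second because $d\alpha_i=0$, so $d\omega_\epsilon=0$.

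Non-degeneracy is the heart of the argument, and I would attack it through the top power. Since $\gamma$ has odd degree and $df_\epsilon\wedge df_\epsilon=0$, one has $(df_\epsilon\wedge\gamma)^2=0$, so the binomial expansion collapses to
\begin{equation*}
\omega_\epsilon^n=\beta^n+n\,f_\epsilon'(x)\,dx\wedge\gamma\wedge\beta^{n-1},
\end{equation*}
to be compared with $\omega^n=\beta^n+n\,\frac{dx}{x^{2k}}\wedge\gamma\wedge\beta^{n-1}$. Non-degeneracy of $\omega$ guarantees that $dx\wedge\gamma\wedge\beta^{n-1}$ restricts to a nowhere-zero form along $Z$ (in the surface case this is exactly the statement that the leading Laurent coefficient $\alpha_0$ is nonvanishing on $Z$). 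Because $f'>0$ on all of $[-1,1]$ and $f'(x)=x^{-2k}>0$ outside it, the function $f_\epsilon'(x)=\epsilon^{-2k}f'(x/\epsilon)$ is strictly positive everywhere and tends to $+\infty$ uniformly on the tube as $\epsilon\to0$. Hence near $Z$ the term $n\,f_\epsilon'(x)\,dx\wedge\gamma\wedge\beta^{n-1}$ dominates the bounded term $\beta^n$ and keeps the constant sign of the singular part of $\omega^n$, so $\omega_\epsilon^n\neq0$ there for $\epsilon$ small; off the tube $\omega_\epsilon^n=\omega^n\neq0$ by the first step. Thus $\omega_\epsilon$ is non-degenerate.

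Finally, the $C^{2k-1}$-convergence of $\omega_\epsilon^{-1}$ to $\omega^{-1}$ is the delicate point and, I expect, the main obstacle. Off the $\epsilon$-tube the two bivectors are literally equal, so the problem reduces to a one-variable estimate, comparing the scalar coefficient $1/(f_\epsilon'(x)\,p(x,y)+b)$ — where $p=\sum_i a_i x^i$ with $p(0,\cdot)\neq0$ encodes the leading Laurent data — against its limit $x^{2k}/(p+b\,x^{2k})$, which is smooth and vanishes to order exactly $2k$ at $x=0$. Rescaling by $u=x/\epsilon$ and exploiting that $f$ was constructed to match the model $\frac{-1}{(2k-1)x^{2k-1}}$, whose derivative is precisely $x^{-2k}$, one checks that the coefficient and its first $2k-1$ derivatives converge uniformly across $Z$, whereas the $2k$-th derivative is exactly where the singular Poisson data persists and fails to converge. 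Making this matching quantitative and verifying it survives inversion is the technical crux; it is the computation carried out in \cite{evajonathanvictor}, which I would invoke to close the argument.
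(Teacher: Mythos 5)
First, some context you could not have known: this paper contains \emph{no} proof of this statement. The theorem is imported verbatim from \cite{evajonathanvictor}, and the proof environment that follows it in the source belongs to the next theorem (compatibility of the classification with deblogging). So your proposal must be judged as a self-contained argument, and judged that way, two of the three assertions are handled correctly and completely: the computation $df_\epsilon=dx/x^{2k}$ for $|x|>\epsilon$ (the additive constants $\pm 2\epsilon^{-(2k-1)}$ dying under $d$) gives the coincidence claim; closedness follows as you say from closedness of the $\alpha_i$ and of $\beta$; and the non-degeneracy argument via
\begin{equation*}
\omega_\epsilon^n=\beta^n+n\,f'_\epsilon(x)\,dx\wedge\gamma\wedge\beta^{n-1},
\end{equation*}
with $f'_\epsilon\ge c\,\epsilon^{-2k}$ on the tube and $dx\wedge\gamma\wedge\beta^{n-1}$ nonvanishing on a fixed neighborhood of $Z$, is sound. (Note that this yields symplecticity only for $\epsilon$ sufficiently small; that quantifier is part of the correct statement in \cite{evajonathanvictor}, even though the quotation above suppresses it.)

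The genuine gap is the third assertion. For the $C^{2k-1}$-convergence you identify the right mechanism (rescaling $u=x/\epsilon$, order-of-vanishing bookkeeping) but then ``invoke \cite{evajonathanvictor} to close the argument'' --- which is circular, since that is exactly the theorem under proof; as written you have established two claims out of three. The good news is that the estimate is simpler than your ``matching'' plan suggests, because no matching across $Z$ is needed: outside the tube $\{|x|\le\epsilon\}$ the two bivector fields are literally equal, so it suffices to show that on the tube each of the two coefficients tends to $0$ in $C^{2k-1}$ \emph{separately}. In the surface case relevant to this paper the inversion is scalar: the coefficient of $\omega_\epsilon^{-1}$ is $\epsilon^{2k}/\bigl(f'(u)\,p+\epsilon^{2k}b\bigr)$ with $u=x/\epsilon$, where $p$ is the nonvanishing function coming from $\gamma$ and $b$ the coefficient of $\beta$; every $\partial_x$ costs a factor $\epsilon^{-1}$ while the denominator stays bounded below, so its $C^j$-norm on the tube is $O(\epsilon^{2k-j})$, and the coefficient $x^{2k}/(p+x^{2k}b)$ of $\omega^{-1}$ obeys the same bound by the Leibniz rule. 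For $j\le 2k-1$ both are $O(\epsilon)$, which proves the convergence (and shows exactly why it fails at order $2k$); in higher dimensions the same scaling survives the cofactor expansion of the inverse. Adding this estimate would make your proof complete and independent of the reference.
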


Next we apply our classification scheme and see how it behaves using the desingularization procedure\footnote{also called deblogging}:

\begin{theorem}[\textbf{Compatibility of Classification and deblogging}, \cite{evajonathanvictor}] If $[\omega_1] = [\omega_2]$ in $b^{2k}$-cohomology then $[\omega_{1\epsilon}] = [\omega_{2\epsilon}]$ in De Rham cohomology. The other implication does not hold: there are different classes of $b^{2k}$-forms that desingularize to the same class.
\end{theorem}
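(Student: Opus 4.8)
The plan is to establish the forward implication directly from the structure of the desingularization map at the level of cohomology, using the $b^m$-Mazzeo–Melrose decomposition (Theorem \ref{thm:Mazzeo-Melrose}), and then to produce an explicit pair of $b^{2k}$-forms witnessing the failure of the converse.

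For the forward implication, first I would observe that the desingularization $\omega \mapsto \omega_\epsilon$ is defined by replacing $\frac{dx}{x^{2k}}$ with $df_\epsilon$ while retaining the same Laurent data $(\alpha_0,\dots,\alpha_{2k-1},\beta)$. The key point is that the map $\omega \mapsto \omega_\epsilon$ factors through, and is determined by, these Laurent coefficients together with $\beta$, all of which are the \emph{same} De Rham objects entering the $b^{2k}$-cohomology class via Mazzeo–Melrose. Concretely, the isomorphism $^{b^{2k}}H^2(M)\cong H^2(M)\oplus (H^1(Z))^{2k}$ records $[\beta]$ and the classes $[\alpha_i]$; since $df_\epsilon = f_\epsilon'(x)\,dx$ is an \emph{exact} De Rham form on $M\setminus Z$ and extends smoothly across $Z$, the closed form $\omega_\epsilon$ has a De Rham class that depends only on this same data. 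Thus if $[\omega_1]=[\omega_2]$ in $b^{2k}$-cohomology, their Mazzeo–Melrose data agree, hence $\omega_{1\epsilon}$ and $\omega_{2\epsilon}$ are built from identical closed-form ingredients and differ by an exact form, giving $[\omega_{1\epsilon}]=[\omega_{2\epsilon}]$ in $H^2(M)$.

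For the failure of the converse, the cleanest route is a dimension count. By Theorem \ref{thm:Mazzeo-Melrose} the space $^{b^{2k}}H^2(S)$ is $H^2(S)\oplus(H^1(Z))^{2k}$, which for $Z$ a nonempty union of circles has dimension strictly larger than $\dim H^2(S)=1$; the desingularization induces a map to $H^2(S)$ whose target has much smaller dimension, so it cannot be injective on cohomology classes. To make this concrete I would exhibit, on the $b^{2k}$-sphere of Example \ref{ex:sphere}, two forms whose Laurent coefficients $\alpha_i$ differ in a component that integrates to zero against $[S]$ after desingularization — for instance altering one $\alpha_i$ with $i\geq 1$ by a form with nonzero period around a component of $Z$ while keeping the total Liouville volume and the class $[\beta]$ fixed. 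Both desingularize to symplectic forms with the same total area, hence the same De Rham class, while their $b^{2k}$-classes differ.

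The main obstacle will be pinning down precisely how $[\omega_\epsilon]\in H^2(S)$ depends on the Laurent data, since the higher coefficients $\alpha_1,\dots,\alpha_{2k-1}$ contribute to $\omega_\epsilon$ through the terms $x^i\alpha_i$ and these do \emph{not} vanish identically. The resolution is that near $Z$ the contribution of $df_\epsilon\wedge x^i\alpha_i$ integrates, as $\epsilon\to 0$, to a boundary term governed by $\int_\gamma \alpha_i$, but upon desingularization the relevant integral over $S$ collapses to the total Liouville volume plus $\int_S\beta$; the periods $\int_\gamma\alpha_i$ that distinguish $b^{2k}$-classes get \emph{washed out}, which is exactly why non-equivalent $b^{2k}$-structures can desingularize to cohomologous symplectic forms. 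I would make this rigorous by direct computation on the model neighborhood, comparing $\int_S \omega_{1\epsilon}$ and $\int_S \omega_{2\epsilon}$ and checking both equal $V(\omega)+\int_S\beta$ independently of the individual $\alpha_i$.
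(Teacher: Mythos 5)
Your overall strategy---reduce everything to computing $\int_S\omega_\epsilon$, since on a compact connected oriented surface the De Rham class of a closed $2$-form is determined by its integral---is exactly the paper's strategy, and your forward implication would go through once that computation is done correctly. But the computation you propose as the ``resolution'' of your main obstacle is wrong, and the error propagates into your counterexample. You claim that $\int_S\omega_\epsilon$ ``collapses to the total Liouville volume plus $\int_S\beta$'' and is ``independent of the individual $\alpha_i$,'' i.e.\ that \emph{all} the periods $\int_Z\alpha_i$ are washed out by desingularization. This is false. Computing inside $U_\epsilon$,
$$\int_{U_\epsilon}df_\epsilon\wedge\Bigl(\sum_{i=0}^{2k-1} x^i\pi^*\alpha_i\Bigr)\;=\;\sum_{i=0}^{2k-1}\Bigl(\int_{-\epsilon}^{\epsilon}f'_\epsilon(x)\,x^i\,dx\Bigr)\int_Z\alpha_i,$$
and since $f$ is odd, $f'_\epsilon$ is even, so the coefficient $\int_{-\epsilon}^{\epsilon}f'_\epsilon(x)\,x^i\,dx$ vanishes precisely when $i$ is \emph{odd} and is in general nonzero when $i$ is \emph{even}. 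In particular the modular period $\int_Z\alpha_0$ survives: on the $b^2$-sphere it contributes a term of order $\epsilon^{-1}$ to the desingularized area. The forward implication is unharmed---the even-indexed periods are themselves invariants of the $b^{2k}$-class by Mazzeo--Melrose, so $\int_S\omega_\epsilon$ is still a function of $[\omega]$ alone---but your stated reason is not the correct one, and you never invoke the oddness of $f$, which is the actual mechanism driving the whole theorem.

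This error breaks your counterexample as stated. You propose to alter ``one $\alpha_i$ with $i\geq 1$'' by a form with nonzero period; if that $i$ is even (say $i=2$, available as soon as $k\geq 2$), this \emph{changes} $\int_S\omega_\epsilon$ and hence the De Rham class of the desingularization, so the two forms are distinguished rather than identified. The alteration must be made in an odd-indexed coefficient: then both the contribution inside $U_\epsilon$ (by the parity argument above) and the regularized contribution outside (Scott's computation, which the paper quotes) are insensitive to it, so the desingularized classes coincide while the $b^{2k}$-classes differ because $\int_Z\alpha_i$ is a Mazzeo--Melrose invariant. This is exactly what the paper does with $\omega_1=\frac{1}{h^2}\,dh\wedge d\theta$ and $\omega_2=\bigl(\frac{1}{h}+\frac{1}{h^2}\bigr)dh\wedge d\theta$ on $S^2$, which differ only in $\alpha_1$. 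A smaller point: your abstract forward argument also needs repair, since equality of $b^{2k}$-classes gives Laurent data that agree only up to exact forms (and up to choices in the decomposition), not ``identical closed-form ingredients''; the honest route is precisely the integral computation, carried out with the parity bookkeeping above.
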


\begin{remark} This theorem asserts that, for surfaces, equivalent $b^{2k}$-symplectic structures get mapped to equivalent symplectic structures under the desingularization procedure. Though non-equivalent $b^{2k}$-symplectic structures might get mapped to equivalent symplectic structures via deblogging.

\end{remark}
\begin{proof}

In order to compute the class of the desingularization we calculate the integral of the desingularized form over the whole manifold.
We are going to proceed in two steps. Firstly we are going to compute the integral of the desingularization inside the $\epsilon$-neighborhood $U_\epsilon$ of $Z$, and then we compute it outside.

Using the expression of $\omega_\epsilon$ we compute:

\begin{tabular}{rcl}
$\displaystyle \int_{U_\epsilon} \omega_\epsilon$ & $= $& $\displaystyle \int_{U_\epsilon}  df_\epsilon \wedge \left(\sum_{i=0}^{2k-1}x^i\alpha_i\right) + \int_{U_\epsilon}\beta$\\
& $ = $ & $\displaystyle \epsilon^{-(2k-1)}\int_{U_\epsilon}  df(x/\epsilon) \wedge \left(\sum_{i=0}^{2k-1}x^i\alpha_i\right) + \int_{U_\epsilon}\beta$\\
& $ = $ & $\displaystyle \epsilon^{-2k}\int_{U_\epsilon}  \frac{df(x/\epsilon)}{dx} dx \wedge \left(\sum_{i=0}^{2k-1}x^i\alpha_i\right) + \int_{U_\epsilon}\beta$\\
& $ = $ & $\displaystyle \epsilon^{-2k} \sum_{i=0}^{2k-1} \int_{-\epsilon}^{+\epsilon} \frac{df(x/\epsilon)}{dx} x^i dx \int_{Z} \alpha_i + \int_{U_\epsilon}\beta.$\\
\end{tabular}

Then, because $f$ is an odd function, $df(x/\epsilon)/dx$ is even and hence the integral $\int_{-\epsilon}^{+\epsilon} \frac{df(x/\epsilon)}{dx} x^i dx$ is going to be different from $0$ if $i$ is even. Thus,

\begin{tabular}{rcl}
$\displaystyle \int_{U_\epsilon} \omega_\epsilon$ & $ = $ & $\displaystyle \epsilon^{-2k} \sum_{i=1}^{k-1} \int_{-\epsilon}^{+\epsilon} \frac{df(x/\epsilon)}{dx} x^{2i} dx \int_{Z} \alpha_{2i} + \int_{U_\epsilon}\beta$.\\
\end{tabular}

Recall that outside the $\epsilon$-neighborhood the desingularization $\omega_{\epsilon}$ coincides with the $b^{2k}$-symplectic form $\omega$.
Moreover, let us define $U$ a tubular neighborhood of $Z$ containing $U_\epsilon$, (assume $U = [-1,1]\times Z$). Following the computations in \cite{scott} we obtain,

\begin{tabular}{rcl}
$\displaystyle \int_{M\setminus U_\epsilon} \omega_\epsilon$ & $= $& $\displaystyle \int_{M\setminus U_\epsilon} \omega $\\
& $ = $ & $\displaystyle \int_{M\setminus U} \omega + \int_{U\setminus U_\epsilon} \omega$\\
& $ = $ & $\displaystyle \int_{M\setminus U} \omega + \left(\int_{U \setminus U_\epsilon} \beta + \sum_{i=1}^k \frac{-2}{2i-1}\int_Z \alpha_{2i}  \right) + \sum_{i=1}^{k} \left(\frac{2}{2i-1}\int_Z \alpha_{2i}\right)\epsilon^{2i-1}.$\\
\end{tabular}

Also in \cite{scott}, the Liouville Volume that appears at Theorem \ref{thm:scott} is defined as follows:

$$
P_{[\omega]}(0) = \left( \int_{M\setminus U} \omega + \int_{U} \beta + \sum_{i = 1}^{k} \left(\frac{-2}{2i-1}\right)\int_Z \alpha_{2i}\right).
$$

Observe that in \cite{scott} the term $\int_{M\setminus U} \omega$ does not appear but if we add it,  $P_{[\omega]}(t)$ still has the desired properties.
We can now proceed with the computations:

\begin{tabular}{rcl}
$\displaystyle \int_{M} \omega_\epsilon$ & $ = $& $\displaystyle \epsilon^{-2k} \sum_{i=1}^{k-1} \int_{-\epsilon}^{+\epsilon} \frac{df(x/\epsilon)}{dx} x^{2i} dx \int_{Z} \alpha_{2i} + \int_{U_\epsilon}\beta$\\
& $+$& $ \displaystyle \int_{M\setminus U} \omega + \left(\int_{U \setminus U_\epsilon} \beta + \sum_{i=1}^k \frac{-2}{2i-1}\int_Z \alpha_{2i}  \right) + \sum_{i=1}^{k} \left(\frac{2}{2i-1}\int_Z \alpha_{2i}\right)\epsilon^{2i-1}$\\
& $=$& $\displaystyle \epsilon^{-2k} \sum_{i=1}^{k-1} \int_{-\epsilon}^{+\epsilon} \frac{df(x/\epsilon)}{dx} x^{2i} dx \int_{Z} \alpha_{2i}$\\
& $+$& $\displaystyle \underbrace{\int_{M\setminus U} \omega + \left(\int_{U} \beta + \sum_{i=1}^k \frac{-2}{2i-1}\int_Z \alpha_{2i}  \right)}_{P_{[\omega]}(0)} + \sum_{i=1}^{k} \left(\frac{2}{2i-1}\int_Z \alpha_{2i}\right)\epsilon^{2i-1}.$\\
\end{tabular}

This integral only depends on the classes $[\alpha_i]$ and the Liouville Volume $P_{[\omega]}(0)$, which are determined by (and determine) the class of $[\omega]$. So, two $b^{2k}$-forms on the same cohomology class, determine the same cohomology class when desingularized. 

The converse is not true, because in the previous formula we only have terms $[\alpha_i]$ with $i$ even. As a consequence, if two forms differ only in the odd terms have the same desingularized forms assuming the auxiliary function $f$ in the desingularization process is the same.

\end{proof}

\begin{example}

Consider $S^2$ with the following two $b^2$-symplectic structures:
\begin{equation}\omega_1 = \frac{1}{h^2} dh\wedge d\theta, \qquad \omega_2 = \left(\frac{1}{h} + \frac{1}{h^2}\right)dh\wedge d\theta = \frac{1}{h^2}dh\wedge(h d\theta + d\theta).\end{equation}

Observe that for $\omega_1$, the forms on the Laurent series are $\alpha_0^1 = d\theta$ and $\alpha_1^1 = 0$, while for $\omega_2$ they are $\alpha_0^2 = d\theta$ and $\alpha_1^2 = d\theta$. Then $\int_Z \alpha_1^1 = 0 \neq \int_Z \alpha_1^2 = 2\pi$, and hence $[\alpha_1^1] \neq [\alpha_1^2]$ and $[\omega_1] \neq [\omega_2]$. The desingularized expressions of those forms are given by:
\begin{equation}
\omega_{1\epsilon} = \begin{cases}
               \displaystyle\frac{df_\epsilon(h)}{dh} dh\wedge d\theta & \text{if } |h| \leq \epsilon, \\
               \omega_1 & \text{otherwise},
             \end{cases}
 \qquad \text{and} \qquad
\omega_{2\epsilon} = \begin{cases}
               \displaystyle \frac{df_\epsilon(h)}{dh} dh\wedge (h d\theta + d\theta) & \text{if } |h| \leq \epsilon, \\
               \omega_2 & \text{otherwise}.
             \end{cases}
\end{equation}

Let us compute the classes of $\omega_{1\epsilon}$ and $\omega_{2\epsilon}$.

\begin{tabular}{rcl}
  $\displaystyle \int_{S^2} \omega_{2\epsilon}$ & $=$ & $\displaystyle \int_{S^2 \setminus U_\epsilon} \omega_2 + \int_{U_\epsilon} \frac{df_\epsilon (h)}{dh}(hd\theta + d\theta)$\\
   & $=$ & $\displaystyle \int_{S^2 \setminus U_\epsilon} \frac{1}{h^2} dh\wedge (hd\theta + d\theta) + \int_{U_\epsilon} \frac{df_\epsilon (h)}{dh}(d\theta) + \underbrace{\int_{U_\epsilon} \frac{df_\epsilon (h)}{dh}(h d\theta)}_{=0}$\\
   & $=$ & $\displaystyle \int_{S^2 \setminus U_\epsilon} \frac{1}{h^2} dh\wedge d\theta + \underbrace{\int_{S^2 \setminus U_\epsilon} \frac{1}{h} dh\wedge d\theta}_{=0} + \int_{U_\epsilon} \frac{df_\epsilon (h)}{dh}(d\theta)$\\
   & $=$ & $\displaystyle \int_{S^2} \omega_{1\epsilon}$.\\

\end{tabular}

Let us consider the action of $S^1$ over $S^2$ given by
  $\phi :S^1\times S^2 \rightarrow S^2:(t, (h,\theta)) \mapsto (h, \theta + t)$.
Observe that both $\omega_1$ and $\omega_2$ are invariant under the previous action. Moreover, their desingularizations are also invariant.
\end{example}

\section{Constructions and classification of $b^m$-Nambu structures}\label{sec:nambu}

Let us start defining a $b^m$-Nambu structure of top degree,
\begin{definition}\label{defnamb1}
A $b^m$-Nambu structure of top degree on a pair $(M^n, Z)$ with $Z$  a smooth hypersurface  is given by a smooth $n$-multivector field  $\Lambda$  such that exists a local system of coordinates for which
\begin{equation}
\Lambda= x_1^m \frac{\partial}{\partial x_1}\wedge\ldots\wedge\frac{\partial}{\partial x_n}
\end{equation}
and $Z$ is locally defined by $x_1 = 0$.
\end{definition}
Dualizing the local expression of the Nambu structure we obtain the form

\begin{equation}
\Theta=\frac{1}{x_1^m} dx_1 \wedge \ldots \wedge dx_n
\end{equation}

\noindent (which is not a smooth de Rham form), but it is a $b^m$-form of degree $n$ defined on a $b^m$-manifold. As it is done in \cite{guimipi2}, we can check that this dual form is non-degenerate. So we may define a $b^m$-Nambu form as follows.

Mimicking the same condition as for $b^m$-symplectic forms we can talk about non-degenerate $b^m$-forms of top degree. This means that seen as a section of $\Lambda^n(^b T^*M)$ the form does not vanish.

\textbf{Notation:} We will denote by $\Lambda$ the Nambu multivectorfield and by $\Theta$ its dual.

\begin{definition}\label{defnamb2}
A $b^m$-Nambu form is a non-degenerate $b^m$-form of top degree.
\end{definition}

We first include a collection of motivating examples, and then prove an equivariant classification theorem.

\subsection{Examples} 

\begin{enumerate}

\item\textbf{$b^m$-symplectic surfaces:}\label{ex:symplecticnambu} Any $b^m$-symplectic surface is a $b^m$-Nambu manifold with Nambu structure of top degree.

\item \textbf{$b^m$-symplectic manifolds as $b^{m}$-Nambu manifolds:} Let  $(M^{2n}, \omega)$ be a $b^m$-symplectic manifold, then
$(M^{2n}, \underbrace{\omega\wedge \ldots\wedge \omega}_{n})$ is automatically $b^{m}$-Nambu.

\item \textbf{Orientable manifolds:} Let $(M^{n}, \Omega)$ be any orientable manifold (with $\Omega$ a volume form) and let $f$ be a defining function for $Z$, then $(1/f^m)\omega$ defines a $b^m$-Nambu structure of top degree having $Z$ as critical set.

 Any Nambu structure can be written in this way if the hypersurface can be globally described as the vanishing set of a smooth function.

%
%
%
%

 \item\textbf{Spheres:} In \cite{david}, it was given special importance to the example $(S^n, \sqcup_i S_i^{(n-1)})$ because of the Schoenflies theorem\footnote{The nature of this theorem is purely topological in dimension equal or greater than four, and so is its construction.}, which imposes the associated graph to be a tree. The nice feature of this example is that $O(n)$ acts on the $b^m$-manifold  $(S^n, S^{(n-1)})$, and it makes sense to consider its classification under these symmetries. This also works for other homogeneous spaces of type $(G_1/G_2, G_2/G_3)$ with $G_2$ and $G_3$ with codimension 1 in $G_1$ and $G_2$ respectively.

\end{enumerate}
\subsection{ $b^m$-Nambu structures of top degree and orientability}

As we did in the case of $b^{2k}$-symplectic structures we can prove the following theorem:

\begin{theorem} A compact $n$-dimensional manifold $M$ admitting a $b^{2k}$-Nambu structure  is orientable.
\end{theorem}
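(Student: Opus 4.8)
The plan is to generalize the argument used for $b^{2k}$-symplectic surfaces: the even exponent forces the orientation determined by the Nambu form on $M \setminus Z$ to extend coherently across $Z$, and compactness lets us patch the local pieces into a global orientation.

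First I would observe that away from $Z$ a $b^{2k}$-form of top degree is an ordinary de Rham $n$-form, and non-degeneracy means it is nowhere-vanishing there. Hence $\Theta|_{M\setminus Z}$ determines an orientation $\mathcal{O}$ on $M \setminus Z$. The whole problem is then to show that $\mathcal{O}$ extends to a global orientation of $M$. Next, near each point $p \in Z$, I would pick local coordinates in which the Nambu structure takes the normal form of Definition \ref{defnamb1}, so that on a connected chart $U$ one has $\Theta = \frac{h}{x_1^{2k}}\, dx_1 \wedge \cdots \wedge dx_n$ for a nowhere-vanishing smooth function $h$ (in the strict normal form $h \equiv 1$, and $h \ne 0$ in general by non-degeneracy). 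The crucial point is the even exponent: since $x_1^{2k} > 0$ on $U \setminus Z$, the factor $\frac{h}{x_1^{2k}}$ has the constant sign $\operatorname{sign}(h)$ on both slabs $\{x_1 > 0\}$ and $\{x_1 < 0\}$. Therefore on $U\setminus Z$ the orientation $\mathcal{O}$ agrees with the orientation of the smooth, nowhere-vanishing de Rham form $\operatorname{sign}(h)\, dx_1 \wedge \cdots \wedge dx_n$, which is defined on all of $U$, including $Z \cap U$. This furnishes a smooth extension of $\mathcal{O}$ across $Z$ within each such chart.

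Finally I would patch. On overlaps lying in $M \setminus Z$, any two of these local smooth extensions agree because both coincide with the globally defined orientation $\mathcal{O}$; by continuity they then agree on $Z$ as well. Covering a tubular neighborhood of $Z$ by finitely many such charts (using compactness of $M$) and combining with $\mathcal{O}$ on $M \setminus Z$ yields a globally consistent orientation, so $M$ is orientable. Note that one may equivalently package this by passing to the orientation double cover $(\tilde M,\tilde Z)$ with its $\mathbb{Z}/2\mathbb{Z}$ deck action, building a symmetric collar of normal-form neighborhoods around $\tilde Z$ with compatible orientations, and descending; but the direct patching above avoids that detour.

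The main obstacle, and the only place where the hypothesis $m = 2k$ enters, is the coherence of the extension across $Z$: the two local sheets $\{x_1>0\}$ and $\{x_1<0\}$ must receive the \emph{same} smooth orientation. This is guaranteed precisely because $2k$ is even, which keeps $1/x_1^{2k}$ of constant sign; for odd $m$ the factor $1/x_1^{m}$ changes sign across $Z$, the induced orientation flips, and the extension fails, which is exactly what is consistent with the existence of $b^{2k+1}$-structures on non-orientable surfaces exhibited in Section \ref{sec:constructions}.
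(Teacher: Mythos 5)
Your proof is correct, and it takes a genuinely more direct route than the paper. The paper proves this by passing to the orientation double cover $(\tilde M,\tilde Z)$ with its $\mathbb{Z}/2\mathbb{Z}$ deck action, building a symmetric collar of Darboux neighborhoods with compatible orientations around $\tilde Z$, descending that data to $(M,Z)$, then perturbing $\Theta$ on the collar to an honest volume form and gluing it to the rest of $M$ by Radko-type techniques. You instead stay on $M$ itself: the nowhere-vanishing top form $\Theta|_{M\setminus Z}$ orients $M\setminus Z$, the normal form of Definition \ref{defnamb1} with even exponent shows that near $Z$ this orientation coincides with the one induced by the smooth nonvanishing form $\operatorname{sign}(h)\,dx_1\wedge\cdots\wedge dx_n$, and consistency of the chartwise extensions follows because two orientations of a connected open set that agree on the dense complement of the hypersurface agree everywhere (the agreement set is clopen and contains a dense set). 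Both arguments hinge on exactly the same key fact --- $1/x_1^{2k}$ has constant sign across $Z$ --- but your packaging avoids the double cover, the symmetric covering, and the perturbation-and-gluing step entirely, and it never actually uses compactness (orientability follows for noncompact $M$ too). What the paper's formulation buys in exchange is alignment with the equivariant framework running through the rest of the paper (descent of structures along coverings), and the perturbation step produces a genuine volume form on a collar, which is reused in the surface case; but as a proof of this statement, your argument is cleaner and complete.
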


\begin{proof}
  The proof consists in building a collar of $b^m$-Darboux charts for the $b^{2k}$-Nambu structure (such that in local coordinates the Nambu structure can be written as $x_1^{2k} \frac{\partial}{\partial x_1}\wedge\ldots\wedge\frac{\partial}{\partial x_n}$) with compatible orientations in a neighborhood of each connected component of $Z$.

Consider a 2:1 orientable covering $(\tilde{M}, \tilde{Z})$ of the manifold and denote by  $\rho: \mathbb{Z}/2\mathbb{Z}\times \tilde{M} \rightarrow \tilde{M}$ the deck transformation.
For each point $p \in \tilde{Z}$ we provide a Darboux  neighborhood $U_p$ which does not contain other points identified by $\rho$. Thus $U_p \cong \pi(U_p) =: V_p$, and $\Theta = \frac{1}{x^{2k}} dx_1 \wedge \ldots \wedge dx_n$. This form defines an orientation on $V_p\setminus \pi(Z)$.
Take a symmetric covering of such neighborhoods to define a collar of $Z$ with compatible orientations, and compatible with the covering.
The compatible orientations and the symmetric coverings descend to $(M,Z)$, thus defining an orientation in $(M,Z)$.
Thus, we have an orientation in $V\setminus Z$. By perturbing $\omega$ in $V$ we obtain a volume form on $V$, $\tilde{\omega}$, and thus an orientation in $V$. These can be glued to define an orientation via the volume form $\tilde{\Theta}$ on the whole $M$ proving that $M$ is oriented.
\end{proof}

\subsection{Classification of $b^m$-Nambu structures of top degree and $b$-cohomology}

We present the definitions contained in   \cite{david}  of  modular period attached to the connected component of an orientable Nambu structure using the language of $b^m$-forms.

Let $\Theta$ be the dual to the multivectorfield $\Lambda$ defining a Nambu structure.
From the general decomposition of $b^m$-forms as it was set in Equation \ref{eq:decomposition} we may write.

\begin{equation}\label{eq:decompositionnambu}
\Theta=\Theta_0\wedge\frac{df}{f^m}, \text{ with } \Theta_0\in\Omega^{n-1}(M).
\end{equation}

This decomposition is valid in a neighborhood of $Z$ whenever the defining function is well-defined. Otherwise for non-orientable manifolds a similar decomposition can be proved by replacing the defining function $f$ by an adapted distance (see \cite{marcutosorno2}).

With this language in mind, the
the \textbf{modular $(n-1)$-vector field} in \cite{david} of $\Theta$ along $Z$ is the dual of the form $\Theta_0$ in the decomposition above which is indeed  the
\textbf{modular $(n-1)$-form} along $Z$ in \cite{david}.

Recall from \cite{david} in our language:\begin{definition} The \textbf{modular period} $T_\Lambda^Z$ of the component $Z$ of the
zero locus of $\Lambda$ is
\[ T_\Lambda^Z :=
\int_Z \Theta_0>0.\]
\end{definition}

In fact, this positive number determines the Nambu structure in a
neighborhood of $Z$ up to isotopy as it was proved in \cite{david}.

The following theorem gives a classification of $b^m$-Nambu structures.

We will prove a more general result for manifolds, which are not necessarily orientable, admitting a Nambu structure of top degree. We do it using the dual Nambu forms associated to the multivectorfields.

\begin{theorem}\label{thm:bnnambu} Let $\Theta_0$ and $\Theta_1$ be two $b^m$-Nambu forms of degree $n$ on a compact orientable manifold $M^n$. If $[\Theta_0] = [\Theta_1]$ in $b^m$-cohomology then there exists a diffeomorphism $\phi$ such that $\phi^{*}\Theta_1 = \Theta_0$.
\end{theorem}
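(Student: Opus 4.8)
The plan is to run a Moser-type argument in $b^m$-cohomology, adapting the proof of the $b^m$-symplectic classification (Theorem \ref{thm:scott}) to Nambu forms of top degree. Since $\Theta_0$ and $\Theta_1$ are both non-degenerate $b^m$-forms of top degree $n$ on the same oriented manifold, they are non-vanishing sections of $\bigwedge^n({}^{b^m}T^*M)$; on $M \setminus Z$ they are genuine volume forms, and I would first arrange (by orientation, replacing $\Theta_1$ by $-\Theta_1$ if necessary) that they define the same orientation so that the linear combination $\Theta_s = (1-s)\Theta_0 + s\Theta_1$ stays non-degenerate for all $s \in [0,1]$. This is the crucial feature of top degree: unlike in intermediate degrees, a convex combination of two same-sign volume forms is again nowhere zero, so the whole path consists of $b^m$-Nambu forms.

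First I would use the hypothesis $[\Theta_0] = [\Theta_1]$ in $b^m$-cohomology to write $\Theta_1 - \Theta_0 = d\mu$ for some $b^m$-form $\mu$ of degree $n-1$. Then I seek a time-dependent $b^m$-vector field $X_s$ generating a flow $\phi_s$ with $\phi_s^* \Theta_s = \Theta_0$; differentiating, this reduces to the Moser equation
\begin{equation}
\mathcal{L}_{X_s}\Theta_s = \Theta_0 - \Theta_1.
\end{equation}
Because $\Theta_s$ is closed (each $\Theta_i$ is closed, being a Nambu form), Cartan's formula turns the left side into $d(\iota_{X_s}\Theta_s)$, so it suffices to solve $\iota_{X_s}\Theta_s = -\mu$. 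Since $\Theta_s$ is a non-degenerate $b^m$-form of top degree, the contraction map $X \mapsto \iota_X \Theta_s$ from $b^m$-vector fields to $b^m$-forms of degree $n-1$ is an isomorphism, which yields a unique $b^m$-vector field $X_s$ solving the equation. Integrating $X_s$ over $s \in [0,1]$ gives the desired diffeomorphism $\phi := \phi_1$ with $\phi^*\Theta_1 = \Theta_0$; tangency of $X_s$ to $Z$ (automatic for $b^m$-vector fields) guarantees the flow is globally defined and preserves $Z$, using compactness of $M$.

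The main obstacle I anticipate is justifying the non-degeneracy of the linear path $\Theta_s$ and the invertibility of the contraction isomorphism at the level of $b^m$-objects, i.e.\ ensuring that the solution $X_s$ really is a smooth $b^m$-vector field across the critical hypersurface $Z$ rather than merely a de Rham object with singularities. Working in $b^m$-Darboux coordinates near $Z$, where $\Theta_s = \tfrac{1}{x_1^m}\,dx_1 \wedge \cdots \wedge dx_n$ up to the path interpolation, one sees that the factor $\tfrac{1}{x_1^m}$ is exactly absorbed by the $b^m$-structure, so that $\iota_{X}\Theta_s = -\mu$ has a solution with $X$ in the span of $\{x_1^m \partial_{x_1}, \partial_{x_2}, \dots, \partial_{x_n}\}$; confirming this in the decomposition (\ref{eq:decompositionnambu}) is the technical heart of the argument. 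Away from $Z$ everything reduces to Moser's classical theorem for volume forms, so the only real work is the local analysis at $Z$, which the $b^m$-cohomology formalism and the Laurent-type decomposition are designed to handle.
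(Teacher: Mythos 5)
Your proposal follows essentially the same route as the paper's proof: the linear path $\Theta_s=(1-s)\Theta_0+s\Theta_1$, a primitive $\mu$ of $\Theta_1-\Theta_0$ coming from the $b^m$-cohomology hypothesis, the Moser equation $\iota_{X_s}\Theta_s=-\mu$ solved using non-degeneracy of the top-degree $b^m$-form, and integration of the resulting $b^m$-vector field (automatically tangent to $Z$) to produce the diffeomorphism. One caveat: your normalization step of ``replacing $\Theta_1$ by $-\Theta_1$ if necessary'' is not legitimate, since it changes the cohomology class and hence the statement being proved; the correct observation is that $[\Theta_0]=[\Theta_1]$ already forces the two forms to have the same sign on the connected manifold $M$, because the leading coefficient of the Laurent series of a non-degenerate form is a volume form on $Z$, so its integral over each component of $Z$ is a nonzero cohomological invariant (and when $Z=\emptyset$ the total volumes agree), making the linear path non-degenerate without any modification --- this same-sign assumption is also implicit, and unaddressed, in the paper's own assertion that the path is non-degenerate.
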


\begin{proof}
  We will apply the techniques of \cite{moser} with the only difference that we work with $b^m$-volume forms instead of volume forms.

  Since $\Theta_0$  and $\Theta_1$ are non-degenerate $b^m$-forms both of them are a multiple of a volume form and thus the linear path
  $\Theta_t= (1-t)\Theta_0+ t\Theta_1$ is a path of non-degenerate $b^m$-forms.

  Since $\Theta_0$  and $\Theta_1$  determine the same cohomology class:
  $$\Theta_1-\Theta_0=d\beta$$ with $d$ the $b^m$-De Rham differential and $\beta$ a $b^m$-form of degree $n-1$.

  Now consider the Moser equation:
  \begin{equation}\label{moremosertrick}
\iota_{X_t}\Theta_t=-\beta.
\end{equation}

Observe that since $\beta$ is a $b^m$-form and $\Theta_t$ is non-degenerate. The vector field $X_t$ is a $b^m$-vector field.
Let  $\phi_t$  be the t-dependent flow integrating $X_t$.

 The $\phi_t$ gives the desired diffeomorphism $\phi_t:M\to M$, leaving $Z$ invariant (since $X_t$ is tangent to $Z$) and $\phi_t^*\Theta_t=\Theta_0$.
\end{proof}

In particular we recover the classification of $b$-Nambu structures of top degree in \cite{david}:

\begin{theorem}[\textbf{Classification of $b$-Nambu structures of top degree, \cite{david}}]
\label{thm:david} A generic $b$-Nambu structure $\Theta$ is
determined, up to orientation preserving diffeomorphism, by the following three invariants:
the diffeomorphism type of the oriented pair $(M,Z)$,
the modular periods and  the regularized Liouville volume.
\end{theorem}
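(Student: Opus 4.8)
The plan is to obtain Theorem \ref{thm:david} as the $m=1$ specialization of Theorem \ref{thm:bnnambu}, the only genuine additional work being the translation of the abstract $b$-cohomology class into the three concrete invariants in the statement. First I would observe that, once the diffeomorphism type of the oriented pair $(M,Z)$ is fixed, Theorem \ref{thm:bnnambu} applied with $m=1$ already reduces global Nambu-equivalence to the equality of $b$-cohomology classes $[\Theta_0]=[\Theta_1]$ in $^{b}H^n(M)$. The diffeomorphism type of $(M,Z)$ must enter as a separate invariant precisely because one cannot even compare cohomology classes until the two structures are transported to the same pair. So the crux is to show that, for a compact oriented $(M,Z)$, the $b$-cohomology class of a generic $b$-Nambu form is the same data as the pair consisting of the regularized Liouville volume together with the collection of modular periods.

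For the necessity direction (if two generic $b$-Nambu structures are related by an orientation-preserving diffeomorphism then the listed invariants coincide) I would argue directly from naturality: a diffeomorphism carrying $\Theta_1$ to $\Theta_0$ carries $Z_1$ to $Z_0$ and the $\Theta_1$-adapted decomposition \eqref{eq:decompositionnambu} to the $\Theta_0$-one, hence preserves both the modular periods $T_\Lambda^{Z_i}=\int_{Z_i}\Theta_0$ and the regularized Liouville volume, each of which is defined through integration and therefore natural under diffeomorphism. Orientation-preservation guarantees these integrals are matched with the correct sign, which is what keeps the positivity $T_\Lambda^{Z_i}>0$ consistent on both sides.

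For the sufficiency direction I would invoke the $b$-Mazzeo--Melrose isomorphism, Theorem \ref{thm:Mazzeo-Melrose} with $m=1$ and $p=n$, namely
\[
^{b}H^n(M)\cong H^n(M)\oplus H^{n-1}(Z).
\]
Here $H^n(M)\cong\mathbb{R}$ since $M$ is compact, connected and oriented, and, writing $Z=\bigsqcup_i Z_i$ with each $Z_i$ a closed oriented $(n-1)$-manifold, one has $H^{n-1}(Z)\cong\bigoplus_i\mathbb{R}$. The key identification is that the $H^n(M)$-component of $[\Theta]$ is recorded by the regularized Liouville volume (the Nambu analogue of the quantity $P_{[\omega]}(0)$ from Section \ref{sec:desingularization}), while for $m=1$ the Laurent decomposition collapses to the single term of \eqref{eq:decompositionnambu}, so the $i$-th summand of the $H^{n-1}(Z)$-component is exactly the de Rham class $[\Theta_0|_{Z_i}]$, whose integral is the modular period $T_\Lambda^{Z_i}$. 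Consequently, coincidence of all modular periods and of the regularized Liouville volume is equivalent to $[\Theta_0]=[\Theta_1]$, and Theorem \ref{thm:bnnambu} then supplies the diffeomorphism. It is orientation-preserving because it is the time-one flow $\phi_1$ of a $b$-vector field starting at $\phi_0=\mathrm{Id}$, and is therefore isotopic to the identity.

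The main obstacle I anticipate is the clean identification of the two Mazzeo--Melrose summands with these geometric invariants: verifying that the top de Rham component genuinely captures the \emph{regularized} (rather than the naive, divergent) Liouville volume, and that the boundary-type $H^{n-1}(Z)$ component pairs with the fundamental classes of the $Z_i$ to return precisely the modular periods. This is where the sign bookkeeping forced by the orientation of $(M,Z)$ and the normalization $T_\Lambda^{Z_i}>0$ must be handled with care; everything else is a direct transcription of the already-established Theorem \ref{thm:bnnambu} into the vocabulary of \cite{david}.
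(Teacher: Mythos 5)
Your proposal follows essentially the same route as the paper: the paper likewise obtains Theorem \ref{thm:david} as a consequence of Theorem \ref{thm:bnnambu} (via its $b$-cohomological reformulation, Theorem \ref{thm:davidcohom}), using the Mazzeo--Melrose decomposition $^{b}H^n(M)\cong H^n(M)\oplus H^{n-1}(Z)$ to identify the regularized Liouville volume with the $H^n(M)$ summand and the modular periods with the $H^{n-1}(Z)$ summand. Your extra care with the necessity direction and with orientation-preservation (the Moser map being a flow from the identity) only makes explicit points the paper leaves implicit.
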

By Theorem \ref{thm:Mazzeo-Melrose},
$$  {^{b}}\hspace{-1pt}H^n(M) \cong H^n(M) \oplus  H^{n-1}(Z).$$

The first term on the right hand side is the Liouville volume image by the De Rham theorem, as it was done in \cite{gmps} for $b$-symplectic forms.  The second term collects the periods of the modular vector field.  So if the three invariants coincide then they determine the same $b$-cohomology class.

In other words, the statement in \cite{david} is equivalent to  the following theorem in the language of $b$-cohomology.

\begin{theorem}
\label{thm:davidcohom} Let $\Theta_1$ and $\Theta_2$ be two $b$-Nambu forms on an orientable manifold $M$. If $[\Theta_1] = [\Theta_2]$ in $b$-cohomology then there exists a diffeomorphism $\phi$ such that $\phi^{*}\Theta_1 = \Theta_2$.
\end{theorem}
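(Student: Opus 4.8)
The plan is to obtain this statement as the special case $m=1$ of Theorem \ref{thm:bnnambu}, and then to reconcile it with the invariant-based classification of Theorem \ref{thm:david} via the $b$-Mazzeo--Melrose decomposition. First I would observe that a $b$-Nambu form is precisely a $b^m$-Nambu form with $m=1$, so the Moser-path argument already used in Theorem \ref{thm:bnnambu} applies verbatim. Since $\Theta_1$ and $\Theta_2$ are non-degenerate $b$-forms of top degree, each is everywhere a non-vanishing multiple of a fixed $b$-volume form, so the linear path $\Theta_t = (1-t)\Theta_1 + t\Theta_2$ stays non-degenerate. The hypothesis $[\Theta_1] = [\Theta_2]$ provides a $b$-form $\beta$ of degree $n-1$ with $\Theta_2 - \Theta_1 = d\beta$, and non-degeneracy of $\Theta_t$ lets me solve the Moser equation $\iota_{X_t}\Theta_t = -\beta$ for a unique $b$-vector field $X_t$. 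Integrating $X_t$ over $t \in [0,1]$ produces a diffeomorphism $\phi$ with $\phi^*\Theta_2 = \Theta_1$ (equivalently $\phi^{-1}$ realizes $\phi^*\Theta_1 = \Theta_2$), which preserves $Z$ because $X_t$ is tangent to $Z$.

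To match Theorem \ref{thm:david}, I would run the translation in cohomological terms. By $b$-Mazzeo--Melrose (Theorem \ref{thm:Mazzeo-Melrose} with $m=1$) we have ${}^bH^n(M) \cong H^n(M) \oplus H^{n-1}(Z)$, where the $H^n(M)$ summand records the regularized Liouville volume and the $H^{n-1}(Z)$ summand records the modular period $T_\Lambda^Z = \int_Z \Theta_0$, read off from the decomposition $\Theta = \Theta_0 \wedge \frac{df}{f}$. Hence two $b$-Nambu forms share a $b$-cohomology class exactly when their Liouville volumes and all modular periods coincide; together with the fixed diffeomorphism type of the oriented pair $(M,Z)$, these are precisely the three invariants of Theorem \ref{thm:david}, so that theorem supplies $\phi$ and the two formulations agree.

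The main obstacle is not the algebra of the Moser step but two conceptual points. First, one must confirm that the solution $X_t$ of $\iota_{X_t}\Theta_t = -\beta$ is genuinely a $b$-vector field, i.e. tangent to $Z$: this is what guarantees that $\phi$ preserves $Z$ and that the flow exists for all $t \in [0,1]$, where compactness of $M$ (assumed as in Theorem \ref{thm:bnnambu}) is used to ensure completeness. Second, the dictionary between the analytic invariants of \cite{david} and the two summands of the splitting must be set up carefully, including the positivity $T_\Lambda^Z > 0$ and the genericity hypothesis built into Theorem \ref{thm:david}, so that equality of $b$-cohomology classes matches equality of those invariants in both directions.
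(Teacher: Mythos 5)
Your proposal follows essentially the same route as the paper: the paper also treats this statement as the $m=1$ instance of the Moser-path argument of Theorem \ref{thm:bnnambu}, and then uses the Mazzeo--Melrose splitting ${}^{b}H^n(M)\cong H^n(M)\oplus H^{n-1}(Z)$ to identify the $b$-cohomology class with the regularized Liouville volume and the modular periods, so that the statement becomes a cohomological reformulation of the classification in \cite{david}. Your two flagged points (tangency of $X_t$ to $Z$ and compactness ensuring the flow exists for all $t$) are exactly the ingredients the paper's own argument relies on.
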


This global Moser theorem for $b^m$-Nambu structures admits an equivariant version,

\begin{theorem} Let $\Theta_0$ and $\Theta_1$ be two $b^m$-Nambu forms of degree $n$ on a compact orientable manifold $M^n$ and let $\rho:G\times M\longrightarrow M$ be a compact Lie group action  preserving both $b^m$-forms. If $[\Theta_0] = [\Theta_1]$ in $b^m$-cohomology then there exists an equivariant diffeomorphism $\phi$ such that $\phi^{*}\Theta_1 = \Theta_0$.

\end{theorem}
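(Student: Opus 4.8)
The plan is to combine the Moser path argument used in the proof of Theorem~\ref{thm:bnnambu} with the averaging device from the proof of the equivariant Moser theorem \ref{emt}, so that every object produced in the Moser scheme is $G$-invariant and the resulting isotopy commutes with the action. First I would form the linear path $\Theta_t=(1-t)\Theta_0+t\Theta_1$. Exactly as in Theorem~\ref{thm:bnnambu}, since $\Theta_0$ and $\Theta_1$ are non-degenerate $b^m$-forms of top degree inducing the same orientation, each is a non-vanishing multiple of a fixed $b^m$-volume form with the same sign, so every $\Theta_t$ is again a non-vanishing section of $\Lambda^n({}^{b^m}T^*M)$, i.e. a non-degenerate $b^m$-Nambu form. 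Note that $\Theta_t$ is automatically closed, being of top degree on an $n$-manifold.

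Next, using $[\Theta_0]=[\Theta_1]$ in $b^m$-cohomology I write $\Theta_1-\Theta_0=d\beta$ for a $b^m$-form $\beta$ of degree $n-1$, and I replace $\beta$ by the averaged primitive $\tilde\beta:=\int_G\rho_g^*\beta\,d\mu$ with respect to a Haar measure on $G$. Because $\Theta_0$ and $\Theta_1$ are $G$-invariant, $\rho_g^*(\Theta_1-\Theta_0)=\Theta_1-\Theta_0$, hence $d(\rho_g^*\beta)=\Theta_1-\Theta_0$ for every $g\in G$; integrating over $G$ and commuting $d$ past the averaging integral yields $d\tilde\beta=\Theta_1-\Theta_0$, so $\tilde\beta$ is a $G$-invariant primitive. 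I then solve the Moser equation $\iota_{X_t}\Theta_t=-\tilde\beta$. Non-degeneracy of the top-degree $b^m$-form $\Theta_t$ makes the contraction $X\mapsto\iota_X\Theta_t$ an isomorphism from $b^m$-vector fields onto $b^m$-forms of degree $n-1$, so $X_t$ exists, is unique, and is a $b^m$-vector field, in particular tangent to $Z$. Since both $\Theta_t$ and $\tilde\beta$ are $G$-invariant and contraction is equivariant, uniqueness forces $\rho_{g*}X_t=X_t$ for all $g$, so $X_t$ is itself $G$-invariant (equivalently one may re-average $X_t$ as in the proof of Theorem~\ref{emt}).

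Finally I integrate the time-dependent vector field $X_t$. Compactness of $M$ guarantees that the flow $\phi_t$ is defined for all $t\in[0,1]$; tangency of $X_t$ to $Z$ makes each $\phi_t$ preserve $Z$; and $G$-invariance of $X_t$ makes the flow equivariant, $\phi_t\circ\rho_g=\rho_g\circ\phi_t$. The standard Moser computation, using $d\Theta_t=0$ and Cartan's formula, gives $\frac{d}{dt}\bigl(\phi_t^*\Theta_t\bigr)=\phi_t^*\bigl(\cL_{X_t}\Theta_t+\tfrac{d}{dt}\Theta_t\bigr)=\phi_t^*\bigl(d\,\iota_{X_t}\Theta_t+d\tilde\beta\bigr)=\phi_t^*\bigl(-d\tilde\beta+d\tilde\beta\bigr)=0$, so $\phi_t^*\Theta_t=\Theta_0$ and $\phi:=\phi_1$ is the desired equivariant diffeomorphism with $\phi^*\Theta_1=\Theta_0$.

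I expect the routine part to be the Moser bookkeeping, and the only genuinely load-bearing point to be the compatibility of averaging with the rest of the scheme: one must check that $G$-invariance is not lost anywhere along the construction, namely that the averaged form $\tilde\beta$ is still a primitive of $\Theta_1-\Theta_0$ (which relies crucially on invariance of the two endpoint forms) and that $b^m$-regularity, hence tangency to $Z$, survives averaging so that the flow genuinely descends to an equivariant $b^m$-symplectomorphism. The passage from invariance of $X_t$ to equivariance of $\phi_t$ is standard once $X_t$ is known to be $G$-invariant, so the crux is really securing that invariance while staying within the class of $b^m$-vector fields.
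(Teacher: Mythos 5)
Your proposal is correct and follows essentially the same route as the paper: the linear path $\Theta_t=(1-t)\Theta_0+t\Theta_1$, a Haar-averaged primitive $\tilde\beta=\int_G\rho_g^*(\beta)\,d\mu$, the Moser equation $\iota_{X_t}\Theta_t=-\tilde\beta$, and integration of the resulting invariant $b^m$-vector field into an equivariant flow. The only (harmless) difference is that you obtain invariance of $X_t$ from uniqueness of the solution with invariant data, while the paper averages the vector field itself, $X_t^G=\int_G\rho_{g*}(X_t)\,d\mu$; the two devices produce the same invariant solution.
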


\begin{proof}
As in the former proof, write
 $$\Theta_1-\Theta_0=d\beta$$ with $d$ the $b^m$-De Rham differential and $\beta$ a $b^m$-form of degree $n-1$.
Observe that the path $\Theta_t= (1-t)\Theta_0+ t\Theta_1$ is a path of invariant $b^m$-forms.

  Now consider Moser's equation:
  \begin{equation}\label{moremosertrick}
\iota_{X_t}\Theta_t=-\beta.
\end{equation}

Since $\Theta_t$ is invariant we can find an invariant $\tilde{\beta}$. For instance take $\tilde{\beta}=\int_G \rho_g^*(\beta)d\mu$ with $\mu$ a de Haar measure on $G$ and $\rho_g$ the induced diffeomorphism $\rho_g(x):=\rho(g,x)$.

Now replace $\beta$ by $\tilde{\beta}$ to obtain,

  \begin{equation}\label{moremosertrickinv}
\iota_{X_t^G}\Theta_t=-\tilde{\beta}
\end{equation}

\noindent  with $X_t^G=\int_G {\rho_g}_* X_t d\mu$.
 The vector field $X_t^G$ is an invariant $b$-vector field.
Its flow  $\phi_t^G$  preserves the action  and $\phi_t^{G}{^*}\Theta_t=\Theta_0$.

\end{proof}

Playing the equivariant $b^m$-Moser trick as we did in Section \ref{sec:nonorientable} we obtain,
\begin{corollary}\label{thm:bnnambunonorientable} Let $\Theta_0$ and $\Theta_1$ be two $b^m$-Nambu forms of degree $n$ on a manifold $M^n$ (that has not to be orientable). If $[\Theta_0] = [\Theta_1]$ in $b^m$-cohomology then there exists a diffeomorphism $\phi$ such that $\phi^{*}\Theta_1 = \Theta_0$.
\end{corollary}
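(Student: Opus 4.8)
The plan is to reduce the non-orientable case to the orientable case already established in Theorem~\ref{thm:bnnambu}, using the orientation double cover together with the equivariant $b^m$-Moser theorem for Nambu forms (the immediately preceding theorem, applied with $G=\mathbb{Z}/2\mathbb{Z}$). This mirrors exactly the argument used in Corollary~\ref{non-orientable} to pass from Theorem~\ref{emt} to the non-orientable classification of $b^m$-symplectic surfaces, so the structure of the proof is already in place; only the symplectic forms need to be replaced by top-degree $b^m$-Nambu forms.

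Concretely, I would proceed as follows. Let $p:\tilde{M}\to M$ be the orientation double cover, with deck group $\mathbb{Z}/2\mathbb{Z}$ generated by the free involution $\rho$. Pulling back, $p^*\Theta_0$ and $p^*\Theta_1$ are $b^m$-Nambu forms on the orientable manifold $\tilde{M}$, and since $[\Theta_0]=[\Theta_1]$ in $b^m$-cohomology we get $[p^*\Theta_0]=[p^*\Theta_1]$ as well, because $p^*$ descends to a map on $b^m$-cohomology. The key point is that each $p^*\Theta_i$ is automatically invariant under the deck transformation $\rho$: indeed the forms are pulled back from the quotient, so $\rho^*p^*\Theta_i=p^*\Theta_i$. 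This invariance is what lets me invoke the equivariant version rather than the plain Moser theorem.

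Applying the equivariant $b^m$-Moser theorem for Nambu forms (with $G=\mathbb{Z}/2\mathbb{Z}$) produces a $\rho$-equivariant diffeomorphism $\tilde{\varphi}:\tilde{M}\to\tilde{M}$ with $\tilde{\varphi}^*(p^*\Theta_1)=p^*\Theta_0$. Equivariance means $\tilde{\varphi}\circ\rho=\rho\circ\tilde{\varphi}$, so $\tilde{\varphi}$ maps each $\rho$-orbit to a $\rho$-orbit. By the universal property of the quotient applied to $p\circ\tilde{\varphi}$ (as in the commutative diagram~\eqref{CommutativeDiagram} of Corollary~\ref{non-orientable}), there is a unique diffeomorphism $\varphi:M\to M$ with $p\circ\tilde{\varphi}=\varphi\circ p$. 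A short descent check then shows $\varphi^*\Theta_1=\Theta_0$: since $p$ is a local diffeomorphism, $p^*(\varphi^*\Theta_1)=\tilde{\varphi}^*p^*\Theta_1=p^*\Theta_0=p^*\Theta_0$, and injectivity of $p^*$ on the level of forms (again because $p$ is a local diffeomorphism) gives $\varphi^*\Theta_1=\Theta_0$.

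The step I expect to be the only genuine subtlety is verifying that the $b^m$-cohomology class of $\Theta_i$, in particular the positivity and the modular-period data encoded in the decomposition~\eqref{eq:decompositionnambu}, behaves well under pullback to the double cover when $M$ is non-orientable and no global defining function for $Z$ exists. Here one must use the adapted-distance version of the decomposition (as in \cite{marcutosorno2}) rather than a global defining function; the pullback $p^*$ is compatible with this local decomposition because $p$ is a local diffeomorphism carrying $\tilde{Z}=p^{-1}(Z)$ to $Z$, so the invariants transport correctly and the equality $[p^*\Theta_0]=[p^*\Theta_1]$ is legitimate. Everything else is a direct transcription of the symplectic argument, with $b^m$-volume forms in place of $b^m$-symplectic forms.
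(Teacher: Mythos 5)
Your proof is correct and takes essentially the same route the paper intends: pull back to the orientation double cover, apply the equivariant $b^m$-Moser theorem for Nambu forms with the deck group $G=\mathbb{Z}/2\mathbb{Z}$, and descend the resulting equivariant diffeomorphism to the quotient, exactly as the paper's phrase ``playing the equivariant $b^m$-Moser trick as we did in Section~\ref{sec:nonorientable}'' prescribes. If anything, your argument is slightly cleaner than the paper's template in Corollary~\ref{non-orientable}, since you observe at the outset that the pulled-back forms are automatically deck-invariant (because $p\circ\rho=p$), so the equivariant theorem applies directly without first invoking the non-equivariant Moser theorem and then arguing equivariance afterwards.
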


\end{document}